\def\hat {\widehat}
\def\E {{\mathbb E}}
\def \Pr {\mathbb P}
\def\cal {\mathcal}
\def \x {\boldsymbol x}
\def \tilde{\widetilde}
\def\cal{\mathcal}
\def \cal  {\mathcal}
\def \w {{\boldsymbol w}}
 \newtheorem{theorem}{Theorem}
 \newtheorem{proposition}{Proposition}
 \newtheorem{lemma}[proposition]{Lemma}
  \newtheorem{definition}[proposition]{Definition}
\begin{document}

\title [The  Depoissonisation quintet :  Rice-Poisson-Mellin-Newton-Laplace]
 {The  Depoissonisation quintet:  Rice-Poisson-Mellin-Newton-Laplace } 

\author{Brigitte Vall\'ee}

\date{February 2018}
\maketitle

\begin{abstract} 
This paper  is  devoted to the Depoissonnisation process, which is central in  various analyses of the AofA domain.  We first recall the two possible  paths that  may be used in this  process. The first path, called here the Depoissonisation path, is better studied and  is proven to apply in any practical situation;  however,  it  often uses technical tools,  that are not so easy to deal with. Moreover,   the  various results are scattered in the litterature, and  the  most recent results are not  well known within the AofA domain. The present paper gathers in Section 2 all these results in a survey style. The second path, called here the Rice-Mellin path, is less often used within the AofA domain. It is often very easy to apply, but it needs a tameness condition, which appears {\em a priori} to be quite restrictive, and is not deeply studiedin the litterature. In Section 3, the paper  precisely describes the Rice-Mellin path, together with its tameness condition,  in a survey style, too. Finally, in Section 4, the paper presents original results for the Rice-Mellin path: it exhibits a framework,  of practical use,  where the tameness condition is proven to hold.  It then proves that the Rice-Mellin path  is both of  easy and practical use : even though (much?)  less general than the Depoissonisation path, it is  easier to apply. 
\end{abstract}

\bigskip
This is a long paper,  with  around twenty  five pages,  two times longer than the twelve pages abstract which is  recommended for the submission to the  AofA Conference. 
After a general introduction (in Section 1),  it contains two  main parts;   the first (long) part, with  Sections 2 and 3,   is of survey style;   the second part (Section 4) contains original results.

\smallskip
  When restricted to its  Sections 1 and 4,  the paper thus contains  around  fifteen pages; moreover,  most of  Section 4 can be read and understood essentially  with Section 1,  and only few  precise references to Sections 2 and 3. We could have chosen to submit this  shorter version, but  we finally choose to submit the present  long version. We indeed   think that  the survey part (Sections 2 and 3)  may be  useful for the AofA people, on two  main aspects.

  \smallskip First,  the  two paths are   rarely   described {for themselves} in the litterature,  and  general  {methodological} results  are often difficult to isolate amongst {particular} results that are more directed towards  various applications.   It  was  not an easy  task for us to collect 
  the main  results for the Depoissonisation path, as they  are scattered in at least five papers,  with a   chronological order  which does not correspond to the logical order of the method.  The Rice-Mellin path  is  also almost always presented  in the litterature with a strong focus towards possible applications.  Then, its use seems to be very restrictive, and this explains  why  it is  very often undervalued. 
  
  \smallskip Second, the two paths are not precisely compared, and the situation creates  various ``feelings'':  some people see the tools  that are used in  two paths as  quite different, and strongly prefer one of the two paths;  some other  think  the two paths are almost the same, with just a change of vocabulary.  It is thus useful to  compare the two paths when they may  be both used, and exactly compare their tools.  We perform this comparaison on a precise problem, related to the analysis of tries,  introduced in Section  \ref{secanatrie}. 
  
  \medskip
  Section 4  exhibits a general framework, the basic sequences (and even the extended basic sequences),  where the Rice-Mellin  conditions are proven to hold. This Section uses the shifting of sequences described in Section \ref{VD} \ and then the inverse Laplace transform, which does not seem of classical use in this context. This adds a new method to the Depoissonisation context and explains the title  of  our paper.  This approach  only deals  with integrals on the real line, and use quite simple tools. It  is perhaps of independent interest.

\section{General framework.} 
We first recall the two  probabilistic models, the Bernoulli model and the Poisson model. Then we introduce in Sections 1.3  to 1.5   the two main objects attached to a sequence $f$: the  classical Poisson transform $P_f$, and  another sequence,  denoted as $\Pi[f]$ and  called here the Poisson sequence. This terminology is not classical,  both for the name (Poisson sequence) and the notation (the mapping $\Pi$). We insist on the  involutive character of the mapping $\Pi$. Then, we introduce  the shifting operation  $T$ on the sequences, and observe that the two maps $\Pi$ and $T$ almost commute.  This leads to the notion of canonical sequence, which proves very useful in our study. Then,  Section 1.7  describes the two paths of interest. The  last three subsections  of  Section  1 are devoted to a particular analysis, the trie analysis,  which strongly motivates the present work, and is performed within each of the two paths.  

\subsection{Algorithms whose inputs are finite sequences of data} 
Many algorithms deal with inputs that are finite  sequences of  data. We  give some examples :  
$(a)$ 
for text algorithms,   data  are words, and  inputs  are   finite sequences of words;   $(b)$
for  geometric algorithms,  data  are  points,  and  inputs are  finite sequences of points; 
$(c)$ for a source,  data are symbols, and inputs are   finite sequences of symbols, namely  finite words.

 The cardinality  of the sequence plays  a prominent role, and  it is often chosen as the  input size. As usual, one is interested in the  asymptotic behaviour of the algorithm for large  size.

 \subsection{Probabilistic models.}
The  probabilistic framework is as follows: Each data (word or point) is  produced along a distribution, and the set  of  data is thus a probabilistic space $({\cal X}, {\mathbb P})$.  There  are various cases for the set ${\cal X}^n$ of  sequences of length $n$;  
 very often,  the data are independently chosen with the same distribution and the set $({\cal X}^n,  {\mathbb P}_{[n]})$ is  the product of order $n$ of the space $({\cal X}, {\mathbb P})$.

\medskip
The space of all the inputs is thus the set ${\cal X}^\star:=  \sum_{n \ge 0} {\cal X}^n$ of   finite sequences $\x$ of elements of ${\cal X}$, 
and there are two main probabilistic models:

\begin{itemize}
\item [$(i)$]  The Bernoulli model
${\cal B}_n$,  where the cardinality   of $\x$ is fixed and equal to  $n$ (then tends to $\infty$); 

\item[$(ii)$]    The Poisson model ${\cal P}_z$ of parameter $z$,  where the cardinality    of $\x$  is a random variable  that follows a Poisson law of parameter $z$,   
$$ \Pr [ |\x|  = n] = e^{- z} \frac {z^n}{n!}, $$ 
where the parameter  $z$ is fixed (then tends also to $\infty$).

\end{itemize} 

\subsection{Instances of natural  costs defined on ${\cal X}^\star$.} 
The Bernoulli model is more natural in algorithmics, but the Poisson model has  very nice probabilistic properties, notably properties of independence. Thus, one very often proceeds as follows: 
  one considers a  variable  (or a cost) $R: {\cal X}^\star \rightarrow {\mathbb N}$   which describes the behaviour of the algorithm  on the input;  for instance, for     $\x \in {\cal X}^\star$, 
  \begin{itemize}
  
  \item[$(a)$]    $R(\x)$ is the path length of a tree (trie or dst) built on the sequence $\x:= (x_1, \ldots, x_n)$ of words $x_i$ 
  
   \item[$(b)$]   $R(\x)$ is the number of  points  in the convex hull built   on the sequence  $\x= (x_1, \ldots, x_n)$
   of points $x_i$
   
  \item[$(c)$]  $R(\w)$ is the probability $p_\w$ of the word $\w$ viewed as a sequence  $\w := (w_1 \dots, w_n)$ of symbols $w_i$
  
  \end{itemize}   Our final aim is the analysis of $R$ in the model ${\cal B}_n$, 
  but we often begin to perform the analysis in the easier Poisson model  ${\cal P}_z$, 
  and we then wish to return in the Bernoulli model.

    \subsection{The Poisson transform and the Poisson sequence}   \label{Secdef}
 
  Our final aim is to study the asymptotics of 
  the sequence  $n \mapsto f(n)$, 
  where $f(n) := \E_{[n]}[R]$ is the expectation of the cost $R$ in the Bernoulli model ${\cal B}_n$.  
    The expectation $ \E_z[R]$ in the Poisson model ${\cal P}_z$ then satisfies
   $$  \E_z[R] = \sum_{n \ge 0} \E_z [R \mid\!\! N = n]  \,  \Pr_z [N= n]  
    	 = \sum_{n \ge 0} \E_{[n]} [R]  \,   \Pr_z [N= n] =  e^{-z}  \sum_{n \ge 0} {  f(n)} \frac {z^n}{n!} \, .$$
		
This leads us to introduce the Poisson transform  of the sequence $f : n\mapsto f(n)$,  
\begin{equation} \label{Pf} P_f(z) :=   e^{-z}  \sum_{n \ge 0} {  f(n)} \frac {z^n}{n!} \, , 
\end{equation}
and   the following holds: 

\begin{lemma} 	 Consider a cost $R$ defined on the set ${\cal X}^\star$,  its expectation  $f(n)$ in the Bernoulli model ${\cal B}_n$.  Then  its expectation $\E_z[R]$ in the Poisson model ${\cal P}_z$ coincides with the Poisson transform $P_f(z)$ of the sequence $f: n \mapsto f(n)$ defined in \eqref{Pf}.
\end{lemma}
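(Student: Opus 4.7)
The plan is simply to justify the chain of equalities already displayed just above the lemma statement. The proof is essentially a decomposition according to the cardinality $N := |\x|$ of the random input $\x$ under the Poisson model ${\cal P}_z$.

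First I would invoke the law of total expectation, writing
$$ \E_z[R] \;=\; \sum_{n \ge 0} \E_z[R \mid N = n]\, \Pr_z[N = n], $$
which is valid (and the sum converges absolutely) as soon as $R$ is, say, nonnegative or the expectation is finite; in the AofA setting costs are nonnegative integers so no issue arises. Next I would identify the conditional law of $\x$ under ${\cal P}_z$ given $N = n$ with the Bernoulli law ${\cal B}_n$. This is the real content of the step: the Poisson model is built as a mixture in which, at a fixed cardinality $n$, the $n$ data are drawn independently from $({\cal X}, \Pr)$, so the conditional distribution coincides by construction with $({\cal X}^n, \Pr_{[n]})$. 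Consequently $\E_z[R \mid N = n] = \E_{[n]}[R] = f(n)$.

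Finally I would plug in the Poisson mass function $\Pr_z[N = n] = e^{-z} z^n/n!$ and recognise the resulting series as the Poisson transform $P_f(z)$ from definition \eqref{Pf}, which closes the argument.

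The only genuinely non-routine point is the identification of the conditional law at step two; everything else is bookkeeping. Since the Poisson model has been defined in Section 1.2 precisely as the mixture $\sum_n \Pr_z[N=n]\,\Pr_{[n]}$ over ${\cal X}^\star = \sum_n {\cal X}^n$, this identification is actually built into the definition, so the proof reduces to reading off the definitions and performing the substitution.
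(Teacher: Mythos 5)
Your proposal is correct and follows exactly the computation the paper displays just before the lemma: condition on the cardinality $N$, identify the conditional law given $N=n$ with the Bernoulli model ${\cal B}_n$ so that $\E_z[R\mid N=n]=f(n)$, and substitute the Poisson weights to recover $P_f(z)$. Your only addition is to make explicit the (routine) justification of the conditional-law identification, which the paper leaves implicit.
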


The Poisson transform   itself can be written  as an exponential generating function (with ``signs'')\footnote{The  signs are added  in order to get an involutive formula in \eqref{binom}.}  and this defines a sequence $p: n \mapsto p(n)$,  
\begin{equation} \label{Pfp}
P_f(z) := e^{-z} \sum_{k \ge 0}  { f(k)} \, \frac {z^k} {k!} = \sum_{k \ge 0} (-1)^k \frac {z^k}{k!} { p(k)} \, 
\end{equation}
which will be called the Poisson sequence of the sequence $f$. We summarize:

\begin{definition}  Consider a sequence $f: n \mapsto f(n)$. 

$(a)$ 
The series  defined in \eqref{Pf}
is called  the {Poisson transform} of  the sequence $f$.

$(b)$ 
The sequence $p: k\mapsto p(k)$ defined  in \eqref{Pfp} by the signed coefficients of $P_f(z)$,  
\begin{equation} \label{p} p(k) := (-1) ^k  k ! [z^k]   P_f(z)\, 
\end{equation}
is called the Poisson sequence of the sequence $f$. It is denoted as $\Pi[f]$. 

$(c)$
Relation \eqref{Pfp} holds between the initial sequence $f$,  its Poisson transform  $P_f$ and its Poisson sequence $\Pi[f]$.  
\end{definition}

  \begin{lemma} 
  There  are  binomial relations between the sequences   $f$  and $p := \Pi[f]$, namely
  \begin{equation} \label{binom} 
   {p(n)} = \sum_{k = 0}^n (-1)^k {n \choose k} {f(k)}, 
  \quad \hbox{and}\quad{f(n)} = \sum_{k = 0}^n (-1)^k {n \choose k} { p(k)} .
  \end{equation}
  The map $\Pi : f \mapsto  \Pi[f]$  is involutive. 
  \end{lemma}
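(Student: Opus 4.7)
The plan is to read both binomial identities directly off the two equivalent expressions for $P_f(z)$ in \eqref{Pfp}, and then deduce involutivity as a formal consequence.

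For the first identity, I would start from $P_f(z)=e^{-z}\sum_{k\ge 0}f(k)z^k/k!$, expand $e^{-z}=\sum_{j\ge 0}(-1)^j z^j/j!$, and compute the Cauchy product:
\[
[z^n]P_f(z)=\sum_{k=0}^{n}\frac{(-1)^{n-k}}{(n-k)!}\cdot\frac{f(k)}{k!}=\frac{1}{n!}\sum_{k=0}^{n}(-1)^{n-k}\binom{n}{k}f(k).
\]
Multiplying by $(-1)^n n!$ as in the definition \eqref{p} of $p(n)$ gives $(-1)^{2n-k}=(-1)^k$ inside the sum, which is exactly the first formula in \eqref{binom}.

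For the second identity, I would rewrite \eqref{Pfp} in the form
\[
\sum_{k\ge 0}f(k)\frac{z^k}{k!}=e^{z}\sum_{k\ge 0}(-1)^k p(k)\frac{z^k}{k!},
\]
expand $e^{z}=\sum_{j\ge 0}z^j/j!$, perform the Cauchy product of the right-hand side, and identify the coefficient of $z^n/n!$; this is formally the same computation as above but with the roles of $f$ and $p$ swapped and with $e^{+z}$ in place of $e^{-z}$, so no signs $(-1)^{n-k}$ appear, and one directly obtains the second formula in \eqref{binom}.

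Finally, involutivity is automatic: starting from $p=\Pi[f]$ and applying the already-proved first identity with $p$ in place of $f$ yields
\[
\Pi[p](n)=\sum_{k=0}^{n}(-1)^k\binom{n}{k}p(k),
\]
and by the second identity of \eqref{binom} this equals $f(n)$, so $\Pi[\Pi[f]]=f$. There is no real obstacle here; the only care required is bookkeeping of signs (the $(-1)^n$ factor in the definition of $p$ combining with the $(-1)^{n-k}$ coming from $e^{-z}$ to leave just $(-1)^k$), and a brief justification that swapping the summation order in the Cauchy product is legal, which is the standard identity between formal power series.
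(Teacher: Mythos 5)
Your proof is correct: both identities follow exactly as you compute them by extracting $[z^n]$ from the two expansions of $P_f(z)$ in \eqref{Pfp}, with the sign bookkeeping $(-1)^n\cdot(-1)^{n-k}=(-1)^k$ handled properly, and involutivity then follows formally by chaining the two identities. The paper states this lemma without proof (it is the classical binomial/Euler inversion), so there is no argument to compare against; your derivation is the standard and natural one.
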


    \subsection {Shift $T$  and canonical  sequences} \label{VD}
     This technical section  which will be important in the sequel. The notions that are presented here are not  introduced in this way in the litterature, and,  in particular, the notion of canonical sequence appears to be new (and useful), notably in Section~4. 
  
\begin{definition}  \label{red} Consider  a non zero  real sequence  $n \mapsto f(n)$. 

$(a)$ Its degree   ${\rm deg} (f)$ and its valuation ${\rm val} (f)$ are defined as 
   $$   {\tt deg} (f):= \inf\{c \mid f(k) = O(k^c)\}   \qquad {\tt val} (f) := \min \{ k  \mid f(k)\not = 0\}\, .$$
  A sequence $f$  with finite degree is said to be of polynomial growth. 

$(b)$ A sequence $n \mapsto f(n)$  satisfies 
  the  Valuation-Degree Condition ({\tt VD}), 
 
 \vskip 0.1cm 
  \centerline{iff   
  ${\tt val} (f) >  {\tt deg } (f) +1$.}
  
  $(c)$ It is reduced   if it satisfies $  {\tt val} (f) = 0$ and  ${\tt deg } (f) <-1$.
  \end{definition}

 \bigskip
  This  {\tt VD} Condition  will be  important in the following proofs of Sections 3 and 4. However, as we are (only) interested in the asymptotics of the sequence $f$, the {\tt VD} condition   is easy to ensure, as we now show: 
 We begin with a sequence $F$  of polynomial growth with ${\tt deg}(F) =   d$, we  associate to $d$  the integer $\sigma(d) \ge 1$, defined   as follows: 
 \begin{equation} \label{ell}
 \sigma(d) := 1 \quad  \hbox {(if $d >0$)}, \quad  \sigma(d):= 2 + \lfloor d \rfloor  \quad \hbox{ (if $ d \ge 0$)}\,  . 
 \end{equation}
Then,   the inequality  $ \sigma(d)>d+1$ holds;  we replace the  first terms of the sequence $F$  with index $k < \sigma(d)$ by zeroes. We thus define the sequence $F^+$ as 
   $$F^+(n) = 0  \quad \hbox{for 
   $n \le  \sigma(d) -1$}, \qquad  F^+ (\sigma(d))  = 1, \qquad   F^+ (n)  = f(n)  \quad \hbox {for 
  $ n >  \sigma(d)$} \, .$$
  As ${\tt deg}(F^+)  =   {\tt deg}(F) = d $ and ${\tt val} (F^+) = \sigma(d)$ with $\sigma$ defined in \eqref{ell}, this entails  the inequality ${\tt val} (F^+) >  {\tt deg } (F^+) +1$:   the sequence $F^+$  keeps  the same asymptotics as the  initial sequence $f$ and now satisfies the {\tt VD} condition.

\medskip 
Start now with  a sequence $f$  of degree $d$ and of valuation 
${\tt val} (f) = \ell = \sigma(d)$.
Then the Poisson transform $P_f(z)$  has itself valuation $\ell$ and is  written as  
\begin{equation} \label {Q}
P_f(z) =  z^{\ell} Q(z) \quad \hbox{with} \quad 
  Q(z)  = e^{-z}   \sum_{ k \ge 0}   g(k)\,    \frac {z^{k} }{ k !}  = \sum_{k \ge 0} (-1)^k \frac {z^k}{k!} {q(k)} \, .
\end{equation}
We now express the new sequences $g$ and  $q = \Pi[g]$ (both with  zero valuation) in terms of the initial sequences $f$ and $p := \Pi[f]$.

\begin{lemma} \label{shift}  Consider the shifting map  $T$ which associates with a sequence $f$  the sequence $T(f)$ defined as  
$$  T[f] (n)    = \frac {f(n+1)}{n + 1}\, , \qquad   \hbox{and then} \quad  T^{m}[f] (n)   =   \frac {f(n+m)}{(n + 1)\ldots (n+ m)}\, ,  $$ 
for any $n \ge 0$ and any 
$m \ge 1$.   For $m \ge 1$, the inverse mapping  $T^{-m}$ associates with a sequence $g$ the sequence  $f$ defined as $$ f(n) = n(n-1) \ldots (n-m +1) \, g(n-m), \quad \hbox{ for $n \ge m$}\, .$$

$(a)$ The shifting $T$  ``almost'' commutes with the involution $\Pi$, 
 $$ T\circ \Pi =  - \Pi\circ T , \qquad   \Pi\circ T^{m}  =  (- 1)^{m}  T^{m} \circ \Pi \, .$$

$(b)$ Consider a sequence $f$ with  ${\tt val} (f) = \ell$. Then, the two sequences $g$ and $q:= \Pi[g]$ associated with $f$ via Eqn \eqref{Q} are expressed with  the  iterate of $T$ of order $\ell$, namely
$$ g  = T^{\ell} [f], \qquad q :=   
 \Pi[g] = (-1)^{\ell}  T^{\ell} [\Pi [f]]  $$ 
 
 $(c)$  If $f$   has degree $d$ and  valuation $\ell = \sigma(d)$, then
 the sequence $g$  defined in Eqn \eqref{Q} satisfies the {\tt VD} condition   $ {\tt val} (g) = 0$ and  ${\tt deg } (g)  = d - \sigma(d) <-1$; the sequence $g$ is reduced.

\end{lemma}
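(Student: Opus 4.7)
My plan is to work at the level of exponential generating functions, where the operations $\Pi$ and $T$ both admit clean closed-form descriptions, and then to translate back to sequences. Write $\widehat f(z) := \sum_{k\ge 0} f(k)\,z^k/k!$ and $\widehat p(z) := \sum_{k\ge 0} p(k)\,z^k/k!$ for the EGFs of $f$ and of $p = \Pi[f]$. The defining relation \eqref{Pfp} of $\Pi$ then reads $\widehat p(-z) = e^{-z}\,\widehat f(z)$, while a direct reindexing gives $\widehat{T[f]}(z) = (\widehat f(z) - f(0))/z$.

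For part $(a)$ I would work directly at the level of sequences. Using the binomial formula \eqref{binom} together with the elementary identity $\binom{n}{k}/(k+1) = \binom{n+1}{k+1}/(n+1)$ and a shift of summation index yields
\begin{equation*}
\Pi[T[f]](n) \;=\; -\,T[\Pi[f]](n) \;+\; \frac{f(0)}{n+1}\,,
\end{equation*}
so that $T\circ\Pi = -\,\Pi\circ T$ precisely when $f(0)=0$. For the iterated identity $\Pi\circ T^m = (-1)^m\,T^m\circ\Pi$, the key observation is that if $f$ has valuation at least $m$ then each intermediate sequence $T^j[f]$ (for $0\le j\le m-1$) still vanishes at index $0$; the one-step identity can therefore be applied $m$ times in succession, contributing the sign $(-1)^m$.

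For part $(b)$, the hypothesis ${\tt val}(f)=\ell$ lets me factor the Poisson transform as
\begin{equation*}
P_f(z) \;=\; e^{-z}\!\sum_{k\ge\ell} f(k)\,\frac{z^k}{k!} \;=\; z^\ell\,e^{-z}\!\sum_{j\ge 0}\frac{f(j+\ell)}{(j+1)\cdots(j+\ell)}\,\frac{z^j}{j!} \;=\; z^\ell\,P_{T^\ell[f]}(z),
\end{equation*}
which identifies $g = T^\ell[f]$ and $Q = P_g$. Then part $(a)$, whose valuation hypothesis is our standing assumption, immediately gives $q = \Pi[g] = (-1)^\ell\,T^\ell[\Pi[f]]$.

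For part $(c)$, directly $g(0) = f(\ell)/\ell!\neq 0$ by the very definition of the valuation, so ${\tt val}(g)=0$; and the explicit formula $T^\ell[f](n) = f(n+\ell)/[(n+1)\cdots(n+\ell)]$ has numerator of size $O(n^{d+\varepsilon})$ for every $\varepsilon>0$ and denominator exactly of order $n^\ell$, so ${\tt deg}(g) = d-\ell = d-\sigma(d)<-1$ by the construction of $\sigma$ in \eqref{ell}. The inverse formula for $T^{-m}$ follows by direct substitution. The one delicate point in the whole argument is the remainder $f(0)/(n+1)$ in the $\Pi$--$T$ commutation: without the valuation hypothesis it would not vanish, so the clean iterated identity in $(a)$ really does depend on the successive preservation of the zero-initial-value property under $T$.
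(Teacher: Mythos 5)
Your argument is correct, and since the paper states this lemma without any proof at all, your verification actually fills a gap rather than duplicating an argument. The most valuable thing you add is the exact form of the commutation defect: the identity $\Pi[T[f]](n) = -\,T[\Pi[f]](n) + f(0)/(n+1)$ shows that assertion $(a)$, read literally as an operator identity, holds only on sequences with $f(0)=0$ (for the sequence with $f(0)=1$ and $f(n)=0$ for $n\ge 1$ one gets $\Pi[T[f]]=0$ while $T[\Pi[f]](n)=1/(n+1)$). The paper leaves this hypothesis implicit; it is harmless in context, since the lemma is only ever invoked for sequences of valuation $\ell\ge 1$, and your observation that $T^j[f](0)=f(j)/j!$ vanishes for all $j<m$ precisely when ${\tt val}(f)\ge m$ is exactly what legitimizes the $m$-fold iteration with sign $(-1)^m$. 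The remaining steps --- the factorization $P_f(z)=z^{\ell}\,P_{T^{\ell}[f]}(z)$ obtained from $(k+\ell)!=k!\,(k+1)\cdots(k+\ell)$, the identification ${\tt val}(g)=0$ from $g(0)=f(\ell)/\ell!\ne 0$, and ${\tt deg}(g)=d-\sigma(d)<-1$ --- are routine and correctly carried out (for the degree, the reverse inequality needed to upgrade ${\tt deg}(g)\le d-\ell$ to an equality is immediate from $g(n)=O(n^c)\Rightarrow f(n+\ell)=O(n^{c+\ell})$, and is worth a half-sentence). The generating-function preliminaries in your opening paragraph are not actually used afterwards, but they are consistent with, and would give an alternative route to, the same correction term.
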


\begin{definition}  From an initial sequence $F$ of degree $d$,  and $\ell:= \sigma(d)$ as in \eqref{ell}, the reduced sequence $f:= T^\ell[F^+]$  is called the canonical sequence associated to $F$. It has zero-valuation, and  its degree equals  $c = d- \sigma(d)$. 
\end{definition}

Now, if the sequence $F$ admits an analytic extension on the halfplane $\Re s >0$ of polynomial growth there, its canonical sequence 
$f$  admits an analytic extension  $\varphi $ on the halfplane $\Re s >-\ell$,  which moreover satisfies $\varphi(s)  = O(|s+1|^c)$ there, with $c<-1$.  Then  $\varphi$ is integrable on each vertical line $\Re s = b$ with $b \in ]-1, 0[$. 

\smallskip 
In the sequel, it is then {\em sufficient} to deal with {\em canonical} sequences, and their Poisson pair. Then, 
 the results we obtain for the asymptotics on $f$   will be easily will be easily  transfered on the initial  Poisson pair of $F$ with Properties $(a)$ and $(b)$.  

\bigskip 
{\bf Example.}  In Section \ref{tries}, we  will be interested in the following  sequences $F_0, F_1,  F_2$,  all  of valuation 2, which satisfy moreover
$$ F_0 (k) = 1, \quad F_1(k) = k, \quad F_2(k) = k \log k , \qquad \hbox {for $k \ge 2$} \, .$$
The  sequence $F_0$ satisfies the {\tt VD} Condition, but not the two other ones, that we modify into the $F_1^+ $ and $F_2^+$ sequences. Finally, the canonical sequences are defined for $k \ge 0$, as 
$$ f_0 (k)= f_1(k) = \frac 1 {(k+1)(k +2)}, \qquad f_2 (k) =  \frac {\log (k+3)} {(k+1)(k+2)} \, .$$

  \subsection{Some definitions.}  \label{def}
 This section gathers various definitions and notations about domains of the plane, and behaviours of functions. 
   
   \smallskip{\bf \em Cones and vertical strips.} There are two  important types of domains of the complex plane  we deal with. 
   
  $(i)$  The  cones built on the real line ${\mathbb R}^+$,  with two possible definitions, 
  \begin{equation} \label{cones}
  {\cal C}(\theta) := \{ z \mid |\arg z | <\theta \} \  \hbox{ for $\theta<\pi$}, \quad \hat {\cal C}(\gamma) = \{ z \mid \Re z  > \gamma |z| \} \  \hbox {for $|\gamma| \le 1$} \, , 
  \end{equation}
  related by the relation $\hat {\cal C} (\cos \theta) = {\cal C}(\theta)$. 

\smallskip  
  $(ii)$  The vertical strips, or halfplanes
  $$ {\cal S} (a, b) := \{ z \mid a < \Re z < b \}, \quad { \cal S} (a) :=  \{ z \mid  \Re z >a \} \, .$$
    
  \medskip {\bf \em Polynomial growth.} 
This notion   plays a fundamental role: 
  
  \begin{definition}{\rm[Polynomial growth]} A function $s \mapsto \varpi(s) $  defined in an unbounded domain $\Omega \subset {\mathbb C}$ is said to  be of  polynomial growth if 
there exists $r$ for which   the  estimate $|\varpi(s)|=O(|s|^r)$ holds as $s\to\infty$ on $\Omega$.
\end{definition}

  When $\Omega $ is  included in a vertical strip ${\cal S}(a, b)$, this means:  
$|\varpi(s)|=O(|\Im s|^r)$; 
when $\Omega$  is included in a horizontal cone ${\cal C}(\theta)$ with $\theta < \pi/2$, this means:  
$|\varpi(s)|=O(|\Re s |^r)$; 

\medskip
{\bf \em Tameness.} In an informal way, the notion of tameness describes the behaviour of an analytic  function $\varpi$ on the left of  the halfplane $\Re s >c$  when it  stops being analytic. (see \cite{ClNTVa} for more precisions).

\begin{definition}{\rm[Tameness]} 
 A function  $\varpi$ analytic  and of polynomial growth on   $\Re s >c$  is  {tame} at $s = c$ 
if one of the three following properties holds:

\smallskip
$(a)$
{\rm [$S$-shape] (shorthand for Strip shape)} there
exists a vertical strip $\Re(s)>c- \delta$ for some $\delta>0$
where
$\varpi(s)$ is meromorphic, has a sole  pole (of order $b+1 \ge 1$)
at~$s=c$ and is of polynomial growth as $\lvert \Im s\rvert \to+\infty$.

\smallskip
$(b)$
{\rm [$H$-shape] (shorthand for Hyperbolic shape)} there exists an hyperbolic region
$\mathcal{R}$, defined as, for some $A, B, \rho >0$
\[
\mathcal{R}:= \{ s = \sigma +it ; \ \ \lvert t \rvert \ge B, \ \ \sigma> c- \frac {A}{\lvert{t}\rvert^\rho} \} \bigcup \{ s = \sigma +it ; \ \ \sigma>
c- \frac {A}{B^\rho}, \lvert t \rvert \le B \},
\]
where $\varpi (s)$ is meromorphic, with a sole  pole (of order $b+1$)
at~$s=c$ and is of polynomial growth in~$\mathcal{R}$ as $\lvert \Im s \rvert \to+\infty$.

\smallskip
$(c)$
{\rm [$P$-shape] (shorthand for Periodic shape)} there exists a vertical strip $\Re(s)>c- \delta$ for some $\delta >0$ where $\varpi(s)$ is meromorphic,
has only a pole
(of order $ b+1\ge 1$)
at~$s=c$ and a family $(s_k)$ (for $k \in \mathbb Z \setminus\{0\}$) of simple poles
at~ points $s_k= c+ 2ki \pi t$ with $t \not = 0$, and is of polynomial
growth as $\lvert \Im s \rvert \to+\infty$\footnote{More precisely, this means that $\varpi(s)$ is of polynomial growth on a family of horizontal lines $t = t_k$ with $t_k \to \infty$, and on vertical lines
$\Re(s)= \sigma_0- \delta'$ with some $\delta'< \delta$.}.

\end{definition}

  \subsection{Description  of the two possible paths.}  The sequence $f$  is often  given in an implicit way, and we only deal here   with sequences $f$ of polynomial growth: there exists $r \in {\mathbb R}$ for which $f(n) = O(n^r)$. Then its Poisson transform $z \mapsto P_f(z)$  is entire. We assume that we have some knowledge of type  $(a)$ {\em or} type $(b)$: 
  
  \begin{itemize}
 
 \item[$(a)$]  
    about the Poisson transform  $ P_f(z)$,

   \item[$(b)$]  
    about  the sequence  $\Pi[f]$.  
    
 \end{itemize}  
 The main question  is here:  
 \begin{quote}  {\em 
    Is it possible to  return to the initial sequence $f$ and    obtain  some knowledge about its asymptotics? }
    
     \end {quote}

We now describe the two paths.

 \medskip
$(a)$ {\bf \em Depoissonisation Path.} We deal with   the function $P_f(z)$. We assume  the following conditions  to hold, described as  ${\cal {JS}}$ [Jacquet-Szpankowski] conditions: 
 
 \smallskip
\noindent
\begin{quote} {\em The  asymptotic behaviour of $P_f(z)$ is well-known inside (and outside)  horizontal cones.}
\end{quote}

\smallskip
\noindent
Then,  it is possible to  precisely  compare the asymptotics of $P_f(z)$ (when  $z$  tends to  $\infty$  inside   horizontal cones) and the asymptotics  of the sequence $n \mapsto f(n)$. This is the Depoissonization path.   

\smallskip
\noindent
Moreover, there exists  a condition on the input sequence $f$  under which the ${\cal {JS}}$ conditions hold: 
\begin{quote} 
{\em There exists  an analytic lifting $\varphi$  for the sequence $f$  
which  is of polynomial growth  inside   horizontal cones.  }
\end{quote}

\medskip 
$(b)$ {\bf \em Rice-Mellin Path.} We deal with the Poisson  sequence  $\Pi[f]$. 
We assume  the following conditions to hold,  described as  ${\cal {RM}}$ [Rice-Mellin]  conditions:    

\smallskip
\noindent
\begin{quote}
{\em  There exists an analytic lifting $\psi(s)$ for the sequence $\Pi[f]$  inside vertical strips,  which  is of polynomial growth  and tame. }
\end{quote}

\smallskip
\noindent
 Then the binomial recurrence  \eqref{binom} is transfered into a relation which expresses the  term $f(n)$ as an integral along a vertical line which involves the analytic lifting $\psi(s)$. With tameness of $\psi$, we obtain the asymptotics of the sequence $f$.  This is the Rice-Mellin method.
 
 \smallskip
\noindent
 However, the conditions  on the input sequence $f$ under which the ${\cal {RM}}$ conditions hold on the analytical lifting $\psi$ are not  clearly described  in the litterature. This is the main  purpose of the present paper.

  \subsection{A transversal tool: the Mellin transform.} 
 We recall that the Mellin transform of a function $Q$ defined in $[0, + \infty]$ is defined as
$$Q^\star (s) := \int_0^{+ \infty}  Q(u) \, u^{s-1} du\, .$$
The Mellin transform plays a central role in each of the two paths. Its properties are very well described in  the  survey paper \cite{FlGoDu}  on Mellin transforms.    In particular, we need the good behaviour of the transform on harmonic sums (see  next Section  \ref{tries}) and also 
the  following lemma\footnote{It is called the  Exponential Smallness Lemma in the paper \cite{HwFuZa}, and we keep the same  terminology.} which  proves  that the function $\Gamma(s)$ and its derivatives  $\Gamma^{(m)}(s)$ are  exponentially small along vertical lines (when $ |\Im (s)| \to \infty$).  

\begin {lemma} \label{ES}{\rm [Exponential Smallness Lemma]  \cite{FlGoDu}} If, inside the  cone $\overline {\cal C}(\theta)$  with $\theta>0$ one has $Q(z) = O(|z|^{-\alpha})$ as $z \to 0$ and $Q(z) = O(|z|^{- \beta}$ as $|z| \to \infty$, then  the estimate $Q^\ast(s)  = O( \exp[-\theta |\Im (s)|])$ uniformly holds in the vertical strip ${\cal S} ( \alpha, \beta)$.
\end{lemma}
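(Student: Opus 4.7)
The strategy is the classical contour-rotation argument for Mellin transforms. The starting point is that the hypotheses ($Q$ bounded by $|z|^{-\alpha}$ near $0$ and $|z|^{-\beta}$ at infinity, inside the closed cone $\overline{\cal C}(\theta)$) implicitly carry analyticity of $Q$ on an open neighborhood of that cone. Hence the integrand $Q(z)z^{s-1}$ is holomorphic on the cone (for any fixed $s$), which allows us to deform the contour of integration from the positive real axis to a tilted ray.

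Concretely, for $\phi\in[-\theta,\theta]$ I would consider the keyhole/sector contour formed by the segments $[\varepsilon,R]$ on the real axis, the rotated segment $\{re^{i\phi}:\varepsilon\le r\le R\}$, and the two connecting circular arcs of radii $\varepsilon$ and $R$. By Cauchy's theorem the total contour integral vanishes. The growth assumption $Q(z)=O(|z|^{-\alpha})$ near $0$ makes the small-arc contribution bounded by $O(\varepsilon^{\Re s-\alpha})$, which tends to $0$ as $\varepsilon\to0$ provided $\Re s>\alpha$; symmetrically, $Q(z)=O(|z|^{-\beta})$ at infinity makes the large-arc contribution $O(R^{\Re s-\beta})$, which tends to $0$ as $R\to\infty$ when $\Re s<\beta$. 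Both estimates are uniform for $\Re s$ in any compact subinterval of $(\alpha,\beta)$. This yields the identity
\[
Q^\star(s) \;=\; e^{i\phi s}\int_0^{+\infty} Q(re^{i\phi})\,r^{s-1}\,dr,
\qquad |\phi|\le\theta.
\]

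Taking absolute values and using $|e^{i\phi s}|=e^{-\phi\,\Im s}$, together with the fact that the remaining integral converges absolutely and is bounded uniformly in $\Im s$ (its modulus depends only on $\Re s$ and on the implicit constants in the hypotheses), we obtain
\[
|Q^\star(s)| \;\le\; C(\Re s)\,e^{-\phi\,\Im s}.
\]
To finish, I would choose the angle $\phi$ depending on the sign of $\Im s$: take $\phi=\theta$ when $\Im s>0$ and $\phi=-\theta$ when $\Im s<0$. Both choices are admissible because the cone is closed, and both produce the bound $e^{-\theta|\Im s|}$. Uniformity in the vertical strip ${\cal S}(\alpha,\beta)$ is then immediate, since $C(\Re s)$ is controlled on every compact subinterval of $(\alpha,\beta)$.

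The main technical point — and the only real obstacle — is the justification of the contour rotation: one must verify that the arcs at $0$ and at $\infty$ contribute nothing in the limit, and this is exactly where the two growth hypotheses are used, each one pinning down one side of the strip of convergence. Everything else is bookkeeping. The role of the closed cone $\overline{\cal C}(\theta)$ (rather than the open one) is essential, since we must rotate all the way to angle $\pm\theta$ to extract the sharp exponential factor $e^{-\theta|\Im s|}$.
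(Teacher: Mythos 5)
The paper does not prove this lemma at all: it is quoted verbatim from the Mellin-transform survey \cite{FlGoDu} (where analyticity of $Q$ in the cone is an explicit hypothesis), and your contour-rotation argument --- rotating the ray of integration to angle $\phi=\theta\,\mathrm{sgn}(\Im s)$, killing the two arcs via the growth bounds at $0$ and $\infty$, and extracting the factor $\lvert e^{i\phi s}\rvert=e^{-\theta\lvert\Im s\rvert}$ --- is exactly the standard proof given there. Your reconstruction is correct, including the two points that matter: the analyticity needed for Cauchy's theorem is implicit in the statement, and the uniformity of the constant is really over closed substrips of ${\cal S}(\alpha,\beta)$ since the radial integral degenerates at the edges.
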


  \subsection {An instance of  Depoissonisation context. Probabilistic analysis of  tries} \label{tries}
  A source ${\cal S}$  is a  probabilistic process 
which produces infinite  words  on the (finite)  alphabet $\Sigma:= [0 .. r-1]$. A trie is a  tree structure,  used as a dictionary, which compares words  via their prefixes. Given
a finite  sequence  $\x= (x_1, x_2, \ldots , x_n)$  formed with $n$  (infinite) words emitted by the source ${\cal S}$,  the trie
  ${\cal T}(\x)$  built on the sequence\footnote {The trie depends only on the underlying set $\{x_1, x_2, \ldots , x_n\}$.}
 $\x$  is defined recursively
by the following three  rules:

\begin{itemize}

\item [$(i)$] \ \   If $|{\x}| =0$, ${\cal T}(\x)=\emptyset$

 \item [$(ii)$] \ \  If $|\x|=1,\ \x=\{\ \! x\}$, ${\cal T}(\x)$ is a  leaf labeled
by $x$.

 \item [$(iii)$]  \ \ If $|\x| \ge 2$, then
 ${\cal T}(\x)$ is formed with an internal node and $r$ subtries respectively equal to
 $${\cal T}( \x_{\langle 0\rangle}),\ldots ,{\cal T}(\x_ {\langle  r-1\rangle})$$
 where $\x_{\langle \sigma \rangle}$ denotes  the set consisting of
 words of $\x$ which begin  with symbol $\sigma$,  stripped of their initial
symbol $\sigma$. If the set  $\x_{\langle \sigma \rangle}$ is non empty, the edge which links the subtrie
${\cal T}(\x_{\langle \sigma \rangle})$ to the  internal node is labelled with the symbol $\sigma$.

\end{itemize}
Then,  the internal nodes are used for directing the search, and the leaves contain  suffixes  of $\x$. There are as many leaves as  words in $\x$.  The internal nodes are labelled by prefixes $\w$ for which the  cardinality $N_\w$ of the  subset $\x_{\langle \w\rangle}$  is at least  2.

\medskip  
 Trie analysis aims at describing the average shape of a trie (for instance: number  of internal nodes $S$, external path length $P$, height $H$, etc....). 
 We focus here on {\em additive} parameters,  whose (recursive) definition exactly copies the (recursive) definition of the trie.  With  a   sequence $f: {\mathbb N} \rightarrow {\mathbb R}$ -- called a {\em toll}--  which satisfies $f(0) = f(1) = 0$ and $f(k) \ge 0$ for $k \ge 2$,   we associate  a random variable  $R$ defined on the  set ${\cal X}^\star$  
 as follows: 
 
 \begin{itemize}

\item [$(i)$] \ \   If $|\x| \le 1$,  then \quad $R(\x)=0$

 \item [$(ii)$] \ \  If $|\x| \ge 2$, then \quad 
 $\displaystyle   R(\x)= f(|\x|) + \sum_{\sigma  \in \Sigma}  R(\x_{\langle  \sigma\rangle})$.

\end{itemize}   Iterating the recursion leads to the expression
\begin{equation} \label{SK1}
R (\x):= \sum_{\w \in \Sigma^\star} f(N_\w( \x)), 
\end{equation}
where $N_\w(\x) $ is the number of words  in $\x$ which begin with the (finite) prefix $\w$. 

\smallskip
The size is associated to the toll $f(k) = 1$ (for $k \ge 2$) and the path length  to the toll $f(k) = k$ (for $k \ge 2$).    A version of  the {\tt QuickSort} algorithm   on words  \cite {ClNTVa} leads  to the toll
 $f(k) = k \log k$ (for $k \ge 2$) that we call in the sequel the {\em sorting toll}. 
 
\smallskip
 We are interested in  the asymptotics of 
the mean value $r(n)$ of the random variable $ R$ in the Bernoulli model ${\cal B}_n$, 
 (as $n \to \infty$).  Of course, the probabilistic behaviour  of $R$  will depend  both on  the toll $f$ and 
the source ${\cal S}$. The  probabilistic properties of  the source ${\cal S}$ are themselves
defined from the fundamental probabilities 
\begin{equation} \label{piw}
 \pi_{\w} = \Pr [\hbox{a word  emitted by ${\cal S}$  begins with the prefix $\w$}] \, , 
 \end{equation}
and summarized by the analytic properties of the  generating Dirichlet series  
\begin{equation} \label{Lambda}
 \Lambda(s) := \sum_{\w \in \Sigma^\star} \pi_{\w}^s\, , 
 \end{equation} 
 introduced in \cite {Va1}  and called  there the Dirichlet series of the source.

\subsection{Main principles of  trie analysis}
 The main advantage of the Poisson model  in the framework of sources is the following: In the Poisson model of rate $z$, the variable $N_\w$  which appears in Eqn \eqref{SK1} follows a Poisson law of rate $z\,  \pi_\w$ and involves the  fundamental probability $\pi_{\w}$ defined in \eqref{piw}.  We  then adapt the general framework defined in Subsection \ref{def},  both for the initial sequence $f$ and for the sequence $r$, and consider the two paths:    
 
 \smallskip
 -- in Path $(a)$,  we deal with the Poisson transforms $P_r(z)$ and $P_f(z)$. Then,   averaging Relation (\ref{SK1}) in the Poisson model of rate $z$ entails a relation between  the  two Poisson transforms \begin{equation} \label{SK2} P_r(z) = \sum_{\w \in \Sigma^\star} \E_z [f(N_\w)]= \sum_{\w \in \Sigma^\star} P_f(z\,  \pi_\w). \end{equation} 

\smallskip 
-- in Path $(b)$, we deal with the Poisson sequences $q= \Pi[r]$ and $p = \Pi[f] $.  Then, Relation \eqref{SK2}  
entails the equality
\begin{equation}
q(n) = 
\Lambda(n) \, p(n),  \qquad \hbox{with} \quad  \Lambda(s) := \sum_{w \in \Sigma^\star}\pi_w^s\, .
 \end{equation}
 
 \medskip{\bf Remark. } The role of the Dirichlet series $\Lambda(s)$ is clear  in Path $(b)$. However, the Dirichlet series $\Lambda(s)$   also  clearly appears in Path $(a)$  when using the Mellin transform. 
 Relation (\ref{SK2}) shows that  the function $P_r(z)$ is an harmonic sum\footnote{The function $G$ is an harmonic sum with base function $g$ and frequencies $\mu_k$ if 
 $ G(z)$ is written as $G(z) = \sum_{k} g(\mu_k z)$.} with base function $P_f$ and frequencies $\pi_{\w}$. With classical properties  of the Mellin transform \cite{FlGoDu}, its Mellin transform $P_r^\ast (s)$ factorises as
\begin{equation} \label{Gstar}
P_r^\ast (s) = \Lambda( -s) \cdot P_f^\ast (s)\, , \qquad \hbox{with} \quad  \Lambda(s)  := \sum_{\w \in \Sigma^\star} \pi_{\w}^{s} \, .
\end{equation} 

\medskip
The  probabilistic properties of the source  are described by  the behaviour of its Dirichlet series $\Lambda(s)$ defined in \eqref{Lambda}, notably near $s = 1$.  We consider here a {\em tame} source, for which  $s \mapsto \Lambda(s)$ is  tame at $s = 1$, with  a  simple pole  at $s = 1$ whose residue equals $1/h({\cal S})$ where  $h({\cal S})$ is the entropy of the source. (See \cite{ClNTVa} for a discussion about tameness of sources.)

\subsection {A precise result in the trie analysis.} \label{secanatrie}
The most classical tolls are related to the size of the trie  (with  $f(k) = 1$) and the path length (with $f(k) = k$). We focus here  on   the ``sorting toll'', defined as  \begin{equation} \label {sortoll}
f(k) = k \log k, \quad (k \ge 2), \qquad f(0) = f(1) = 0\, .
\end{equation}
and  are interested in the analysis of the  associated cost $R$, as a kind of test for comparing the two paths.   The analysis was  already performed  in  \cite{ClNTVa} with  Depoissonisation Path  $(a)$. We would  have wished there  to use   the Rice-Mellin  Path $(b)$ (as   we got used in our  previous analyses)  but we did not succeed in proving  the  ${\cal {RM}}$ Conditions to hold. 
This failure was a strong motivation for the present study.  We  now present in this paper two proofs for  the following  result,  each of them using one path. 

\begin{theorem}  \label{anatrie}Consider a trie built on  $n$ words emitted by a source ${\cal S}$.  Assume  furthermore that the Dirichlet series $\Lambda(s)$ of the source is tame at $s = 1$. Then the mean value   of  parameter $R$  associated with the  sorting toll $f$ defined in \eqref{sortoll}  satisfies in the Bernoulli model ${\cal B}_n$ 
$$ r(n)  \sim  \,  \frac 1 { 2\, h({\cal S})} \, n \log ^{2} n   \qquad (n \to \infty) \, .$$
\end{theorem}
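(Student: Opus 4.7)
The plan is to exploit the harmonic-sum structure \eqref{SK2} and its Mellin factorisation \eqref{Gstar}, locate the dominant pole of $P_r^\star(s)=\Lambda(-s)\,P_f^\star(s)$, execute a Mellin inversion to read off $P_r(z)$ along the positive real axis, and transfer the result to the Bernoulli model via Depoissonisation.

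First I would describe the Poisson transform of the sorting toll. Writing $f(k)=k\log k$ (with $f(0)=f(1)=0$) in the form $P_f(z)=z\,\E_z[\log(N+1)]$, classical Poisson-mean asymptotics give $P_f(z)=z\log z+O(1)$ in any cone $\mathcal{C}(\theta)$ with $\theta<\pi/2$, while $P_f(z)=O(z^2)$ near the origin. This places the fundamental strip of $P_f^\star$ at $-2<\Re s<-1$, and the explicit tail integral $\int_1^\infty z^s\log z\,dz=1/(s+1)^2$ (valid for $\Re s<-1$ and continued meromorphically) shows that $P_f^\star$ admits a double pole at $s=-1$ with principal part $1/(s+1)^2$.

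Combined with the simple pole of $\Lambda(-s)$ at $s=-1$, whose residue equals $-1/h(\mathcal{S})$ by the tameness assumption on $\Lambda$ at $s=1$, the product $P_r^\star$ therefore carries a triple pole at $s=-1$ with leading coefficient $-1/h(\mathcal{S})$. Using the Taylor expansion $z^{-s}=z\bigl(1-(s+1)\log z+\tfrac12(s+1)^2\log^2 z+\cdots\bigr)$ near $s=-1$, the Mellin inversion formula, together with the standard contour shift past $s=-1$, delivers
\[
P_r(z)\ \sim\ \frac{1}{2\,h(\mathcal{S})}\,z\log^2 z\qquad(z\to\infty),
\]
uniformly inside any horizontal cone. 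The contour shift is justified by the Exponential Smallness Lemma \ref{ES} applied to $P_f^\star$ together with the polynomial growth of $\Lambda$ on vertical lines, both ingredients supplied by tameness.

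Finally I would conclude via Depoissonisation. The analytic lifting $s\mapsto s\log s$ of the sorting toll has polynomial growth in every cone $\mathcal{C}(\theta)$ with $\theta<\pi$, so the Jacquet-Szpankowski conditions for Path (a) are in force and one obtains $r(n)=P_r(n)+O(n\log n)$, whence the claimed equivalent. The main obstacle in this approach is not the residue calculus but the rigorous control of the remainder Mellin integral on the shifted contour: one must carefully combine the polynomial bound on $\Lambda$ with the uniform decay of $P_f^\star$ on vertical lines. The alternative Rice-Mellin route (Path (b)) would instead require producing a tame analytic lifting of the Poisson sequence $\Pi[r]=\Lambda\cdot\Pi[f]$, and this is the substantially harder task that motivates the framework developed in Section 4.
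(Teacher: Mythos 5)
Your Mellin analysis reproduces the paper's first proof essentially step for step: the same factorisation \eqref{Gstar}, the same double pole of $P_f^\ast$ at $s=-1$ with principal part $1/(s+1)^2$, the same triple pole of $P_r^\ast$ whose residue computation yields $P_r(z)\sim \frac{1}{2h(\mathcal{S})}\,z\log^2 z$, and the same justification of the contour shift via tameness of $\Lambda$ and the exponential smallness of the Gamma-type factors. Up to that point the proposal is sound.

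The gap is in the final Depoissonisation step. To invoke Theorem \ref{thm1} and conclude $r(n)\sim P_r(n)$ you need the ${\cal JS}$-admissibility of $P_r$, not of $P_f$. Your argument --- the analytic lifting $s\mapsto s\log s$ of the \emph{toll} has polynomial growth in cones, hence the JS conditions ``are in force'' --- only establishes, via Theorem \ref{thm2}, that $P_f$ is admissible; it says nothing about $P_r$. The sequence $n\mapsto r(n)$ is defined only implicitly by the trie recursion, so the characterisation of Theorem \ref{thm2} cannot be applied to it directly, and both the inner-cone estimate $(I)$ and the outer-cone bound $(O)$ for $P_r$ must be extracted from the harmonic sum \eqref{SK2}. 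This is exactly where the paper spends the second half of its proof: it writes $P_f(z)=z^2e^{-z}G(z)$ with $g=T^2[f]$, obtains $|G(z)|\le A\exp(\delta|z|)$ outside a cone from Theorem \ref{thm2}, and then propagates this bound term by term through $P_r(z)e^z=z^2\sum_{\w}\pi_\w^2\, G(\pi_\w z)\exp(z-\pi_\w z)$, using the geometry of the cone $\hat{\cal C}(\gamma)$ to control $\delta\pi_\w+\gamma(1-\pi_\w)$ uniformly in $\w$ and conclude $|P_r(z)e^z|\le C\exp(\alpha'|z|)$ with $\alpha'<1$. Without this transfer step the conclusion $r(n)\sim P_r(n)$ is not justified. (A minor further point: the remainder $O(n\log n)$ you state is not what Theorem \ref{thm1} delivers for $\alpha=1$, $\beta=2$, though this does not affect the asymptotic equivalent.)
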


\section {The  Depoissonization path.}  \label{DePoi}

The  Depoissonization path deals with the Poisson transform $P_f(z)$,  and performs the following steps: 

  \begin{itemize} 
  \item [$(1)$] It  compares $f(n)$ and $P_f(n)$ with the {Poisson--Charlier} expansion
  
  \item  [$(2)$] It  uses the {Mellin inverse} transform  for the asymptotics  of $P_f(n)$
  
  \item  [$(3)$]  It needs  {depoissonization}  sufficient conditions ${\cal JS}$,  
 for 
 {truncating} the Poisson-Charlier expansion
 
  \item [$(4)$] It 
  obtains the asymptotics of $f(n)$.

   \end{itemize}
   
   The main result is  informally described as follows:

\begin{theorem}  {Assume that  the  sequence $f$ admits an analytical lifting $\varphi(z)$ 
  on the half plane $\Re s >-1$, and is  of polynomial growth in a cone  ${\cal C}(-1, \theta_0)$ for some $\theta_0>0$.  
 Then the  Depoissonisation path can be applied :  the truncation of the Poisson-Charlier expansion gives rise to an  asymptotic estimate of the sequence $n \mapsto f(n)$ with ``good'' remainder terms. } 
\end{theorem}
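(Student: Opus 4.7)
\medskip

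The plan is to carry out the four steps that Section~2 itself outlines, each being a semi-independent ingredient; the theorem then follows by chaining them. I would structure the proof as follows.

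\textbf{Step 1 (Poisson--Charlier expansion).} I would first establish the purely algebraic identity
\[
f(n) \;=\; \sum_{k\ge 0} \tau_k(n)\,\frac{P_f^{(k)}(n)}{k!},
\]
where the $\tau_k$ are the Charlier-type polynomials satisfying $\tau_0 = 1$, $\tau_1 \equiv 0$, and a two-term recurrence that forces $\deg \tau_k \le \lfloor k/2\rfloor$. This is obtained by expanding $f(n) = \sum_k (-1)^k\binom{n}{k} p(k)$ via~\eqref{binom}, plugging the inverse Poisson formula $p(k) = (-1)^k k!\,[z^k]P_f(z)$, and reorganising around the point $z=n$ using a Taylor expansion of $P_f$; the identity becomes a formal statement about sequences and entire functions, with no analytic content.

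\textbf{Step 2 (Mellin representation and cone bounds for $P_f$).} Next I would pass from the hypothesis on the analytic lifting $\varphi$ of $f$ to cone estimates for $P_f$ and its derivatives. The mechanism is the Mellin--Barnes (Rice--Nörlund) representation
\[
P_f(z) \;=\; \frac{-1}{2\pi i}\int_{c-i\infty}^{c+i\infty}\varphi(s)\,\Gamma(-s)\,z^{s}\,ds,
\]
valid on a vertical strip where $\varphi$ is of polynomial growth, together with the exponential smallness of $\Gamma$ on vertical lines (Lemma~\ref{ES}). Inside the cone ${\cal C}(\theta_0)$ I would shift the contour, picking up the behaviour at $s=0$ and using the polynomial growth of $\varphi$ to control the residual vertical integral; iterating Cauchy's formula on small circles contained in ${\cal C}(\theta_0)$ yields analogous polynomial bounds for every derivative $P_f^{(k)}$.

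\textbf{Step 3 (verification of the ${\cal JS}$ conditions and truncation).} To truncate the expansion of Step~1, I need the asymptotic behaviour of $P_f$ both inside and outside horizontal cones. Inside, Step~2 gives exactly what is needed. Outside the cone, the factor $e^{-z}$ in the definition \eqref{Pf} combined with the polynomial growth of the entire series forces $P_f(z)$ to be exponentially small, which is the ``outside cone'' part of the ${\cal JS}$ conditions. Combining these with the Cauchy bounds on $P_f^{(k)}(n)$ from disks of radius $\rho n$ with $\rho$ chosen so that the disks remain inside ${\cal C}(\theta_0)$, the tail $\sum_{k\ge K}\tau_k(n) P_f^{(k)}(n)/k!$ becomes $O(n^{-M})$ for $K$ large, because $\tau_k(n) = O(n^{k/2})$ while $P_f^{(k)}(n) = O(n^{r-k})$.

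\textbf{Step 4 (conclusion).} Truncating the Poisson--Charlier series at an appropriate $K$ gives
\[
f(n) \;=\; P_f(n) + \sum_{k=2}^{K-1}\tau_k(n)\,\frac{P_f^{(k)}(n)}{k!} + O(n^{-M}),
\]
and injecting the explicit cone asymptotics of $P_f^{(k)}(n)$ from Step~2 transfers the asymptotics of $P_f$ on the real axis to $f$ with a remainder of the claimed form.

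The principal obstacle is Step~3: one must match the polynomial growth exponent coming from $\varphi$ against the growth rate of the $\tau_k$, and verify that Cauchy's formula applied on disks of radius comparable to $n$ still yields useful bounds uniformly in $k$. This uniformity in $k$ is the subtle point, and is what ultimately forces the cone hypothesis $\theta_0>0$ (rather than mere sectorial analyticity) on the lifting $\varphi$.
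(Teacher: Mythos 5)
Your four-step skeleton matches the architecture of the paper (Poisson--Charlier identity, cone estimates for $P_f$, verification of the ${\cal JS}$ conditions, truncation), but two of the steps contain genuine gaps, and the second one is fatal as written. In Step 3 you claim that outside the cone ``the factor $e^{-z}$ combined with the polynomial growth of the entire series forces $P_f(z)$ to be exponentially small.'' The condition $(O)$ that must be verified is $|P_f(z)e^z| = O(e^{\delta|z|})$ with $\delta$ \emph{strictly} less than $1$. The trivial bound you invoke, $|P_f(z)e^z| \le \sum_n |f(n)|\,|z|^n/n! = O(|z|^r e^{|z|})$, only gives $\delta = 1$, which is exactly the borderline case that the saddle-point argument cannot absorb: with $\delta=1$ the contribution of the arc outside the cone is of the same order as the saddle-point contribution and the truncation fails. (The sequence $f(n)=(-1)^n$, of polynomial growth but with no cone-analytic lifting, has $P_f(z)e^z = e^{-z}$ and saturates $\delta=1$ on the negative real axis; this shows the estimate cannot come from coefficient growth alone.) Obtaining $\delta<1$ is precisely the content of Theorem~\ref{thm2} $(ii)\Rightarrow(i)$, and the paper gets it by a different mechanism: the Laplace transform $\widetilde\varphi$ is extended from the cone ${\cal C}(\theta_0)$ to the wider cone ${\cal C}(\theta_0+\pi/2)$, one writes $P_f(z)=\frac1{2i\pi}\int_{\cal L}\widetilde\varphi(s)\exp[z(e^s-1)]\,ds$ over a contour ${\cal L}$ hugging the boundary of that wider cone, and one proves $\Re(e^se^{i\theta})<\alpha<1$ along ${\cal L}$. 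Some argument of this kind, exploiting the analyticity of $\varphi$ in the cone and not merely the size of $f(n)$, is indispensable.

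The second gap is in Step 2: the Mellin--Barnes representation $P_f(z)=\frac{-1}{2\pi i}\int\varphi(s)\Gamma(-s)z^s\,ds$ requires $\varphi$ to be of at most polynomial (or at least sub-$e^{\pi|t|/2}$) growth on \emph{vertical lines}, since $\Gamma(-s)$ decays only like $e^{-\pi|\Im s|/2}$ there. The hypothesis of the theorem grants polynomial growth of $\varphi$ only in a horizontal cone ${\cal C}(-1,\theta_0)$, possibly of small aperture; nothing controls $\varphi$ on vertical lines away from the real axis, so the integral you write may diverge. (There is also a conflation here between the lifting $\varphi$ of $f$ and the lifting $\psi$ of $\Pi[f]$, which is what actually appears in the inverse Mellin integral of $P_f$.) This is the structural reason the Depoissonisation path works with the Laplace transform along rays inside the cone rather than with a Mellin--Barnes integral along a vertical line. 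Your Step 1 and the bookkeeping in Step 4 (matching $\tau_k(n)=O(n^{\lfloor k/2\rfloor})$ against $P_f^{(k)}(n)=O(n^{\alpha-k}\log^\beta n)$) are consistent with the paper's Theorem~\ref{thm1}.
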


It  is based on  five main contributions, that are scattered in the litterature.  The present  survey describes the main steps in a logical way, whereas the ideas and the proofs have not been always obtained in a chronological order.

 \smallskip  The Depoissonisation method, together  with its name, was introduced  in 1998 by Jacquet and Szpankowski in \cite{JaSz}.    They compare  the asymptotics of the  two sequences, the sequence $f(n)$ and the sequence $P_f(n)$.  There were surely  previous results of the same vein, but they were not known by the AofA community. Jacquet and Szpankowski did not use the Poisson-Charlier expansion (described in Section \ref{PoCh})  which was  later introduced  in 2010 into the AofA domain by  Hwang, Fuchs and Zacharovas  in  \cite{HwFuZa}.  Jacquet and Szpankowski also introduced  conditions on the Poisson transform that we call  (following the proposal of   \cite{HwFuZa}) the ${\cal {JS}}$ conditions, described here in Section \ref{JS}. In fact, similar conditions may be found in earlier papers, notably  a paper due to Hayman \cite{Ha} in 1956.  In \cite{JaSz}, the authors prove that, under  ${\cal {JS}}$ conditions, it is possible to compare the two sequences $P_f(n)$ and $f(n)$. Later on, in 2010, using the Poisson Charlier expansion, the authors of  \cite{HwFuZa}  obtain  a  direct and natural proof of this comparison, with a more  explicit remainder term (See Theorem~\ref{thm1}).

  \smallskip
  Finally, in two  other papers,  Jacquet and Szpankowski discussed necessary and sufficient conditions on the initial sequence $f$   for  ${\cal {JS}}$ conditions to hold on $P_f$. The paper  \cite{JaSz1}  deals with  the necessary condition [see Theorem~\ref{thm2} $(ii) \Longrightarrow (i)$]  whereas the  very recent paper \cite {Ja} deals with the sufficient condition [see Theorem~\ref{thm2} $(i) \Longrightarrow (ii)$].

   \subsection{The Charlier-Poisson expansion. } \label{PoCh}
   It was introduced into the AofA domain by   Hwang, Fuchs and Zacharovas  in  \cite{HwFuZa}. One begins with the Taylor expansion of $ P(z) := P_f(z)$ at $z = n$, namely 
 $$P(z) = \sum_{j \ge 0}  \frac {P^{(j)}(n)} {j!} \, {(z-n)^j} . $$
As  the sequence $n \mapsto f(n)$ is of polynomial growth,  the function $z \mapsto e^z P(z)$ is entire, and  there are two expressions of 
$$  e^z P(z) =  \sum_{n \ge 0} \frac {z^n}{n!} \, f(n)  = \sum_{j \ge 0}  \frac {P^{(j)}(n)} {j!} \, e^z{(z-n)^j} . $$ 
The   {Charlier-Poisson} sequence $\tau_j$  is then  related to  
the coefficient of order $n$ in the expansion of $z \mapsto e^z  (z-n)^j$, 
$$ \tau_j(n) :=  n! [z^n] \left((z-n)^j e^z\right)  =  \sum_{\ell = 0}^j  {j \choose \ell} (-1)^{j-\ell}  n^{j-\ell}  \frac {n!}{(n-\ell)!}\, ,  $$
and  is closely related to the  (classical) Charlier polynomial. It   
 is  itself a polynomial in $n$ of degree $\lfloor j/2 \rfloor$.
Then there is an  (infinite) expansion of  $f(n)$, 
$$ 
 f(n) := n! [z^n] \left( e^z P(z)\right) =  \sum_{j \ge 0}  \frac {P^{(j)}(n)} {j!}  \tau_j(n)\, . $$
which is always valid. But we wish {\em truncate}  the infinite sum in order to obtain an estimate of $f(n)$.   
 What happens when we  only keep the first  terms?  Which error is expected?  
  We need  here 
  conditions  on the  Poisson transform $P(z)$ in cones. 
 
\subsection{$ {\cal {JS}}$ Conditions for depoissonisation} \label{JS}
 There  are  {sufficient} conditions on the behaviour of the Poisson transform in {cones}  first described by Haymann (1956) in \cite{Ha}
 and introduced into the AofA domain by Jacquet and Szpankowski (1998) in \cite {JaSz}. 

\begin{definition}  
{\rm  [${\cal {JS}}$ admissibility]}  {
 An   entire function $P(z)$  is $ {\cal {JS}}$-admissible with parameters  $(\alpha, \beta)$ if there exist
  $\theta \in ]0, \pi/2[ $,     $\delta  <1$ for which (for $z \to \infty$)
\begin{itemize}
\item[$(I)$]
 Inside cone ${\cal C}(\theta)$,   one has $  |P(z)|  = O \left( |z|^{\alpha} \log^{\beta} (1 + |z|) \right) $.
\item[$(O)$]
 Outside cone  ${\cal C}(\theta)$, one has 
${ |P(z) e^z|} = O \left(  e^{{\delta} |z|)} \right) $.
\end{itemize}
}
\end{definition}

\begin {theorem} \cite{JaSz, HwFuZa} \label{thm1}
 If    the Poisson transform  $P_f(z)$ of  the sequence $f$  is ${\cal {JS}}(\alpha, \beta)$ admissible,  then
 the first terms of the Poisson-Charlier expansion  provide  the beginning of the  asymptotic expansion  of $f(n)$. More precisely, for any $k>0$, one has:  
$$ f(n) =   \sum_{0\le j <2k}   P^{(j)} (n) \,  \frac {\tau_j (n)} {j!} + O( n^{\alpha-k} \log^\beta n) \, . $$
\end{theorem}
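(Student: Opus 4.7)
Set $T_{2k}(z) := \sum_{0 \le j < 2k} P^{(j)}(n)(z-n)^j/j!$ and $R_{2k}(z) := P(z) - T_{2k}(z)$ for the Taylor polynomial and remainder of $P$ at $z = n$. Since $\tau_j(n) = n![z^n](e^z(z-n)^j)$ by definition, the Cauchy coefficient formula yields
\[E_n := f(n) - \sum_{0 \le j < 2k} P^{(j)}(n)\frac{\tau_j(n)}{j!} = n![z^n]\bigl(e^z R_{2k}(z)\bigr) = \frac{n!}{2\pi n^n} \int_{-\pi}^{\pi} e^{ne^{i\phi} - in\phi}\, R_{2k}(ne^{i\phi}) \, d\phi,\]
after parametrising the circle $|z| = n$ by $z = ne^{i\phi}$. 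The plan is to split $[-\pi,\pi]$ into three zones: a \emph{saddle zone} $|\phi| \le n^{-1/2+\epsilon}$ for a small fixed $\epsilon \in (0,1/4)$; a \emph{cone zone} $n^{-1/2+\epsilon} < |\phi| \le \theta$, still contained in the cone ${\cal C}(\theta)$; and an \emph{outer zone} $|\phi| > \theta$, where condition $(O)$ applies.

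On the saddle zone, I would use the integral form of the Taylor remainder,
\[R_{2k}(z) = \frac{(z-n)^{2k}}{(2k-1)!} \int_0^1 (1-u)^{2k-1} P^{(2k)}(n + u(z-n))\, du,\]
together with Cauchy's estimates on disks of radius proportional to $n$ that stay inside ${\cal C}(\theta)$: from condition $(I)$ this gives $|P^{(2k)}(t)| = O(n^{\alpha - 2k}\log^\beta n)$ uniformly on those disks, hence $|R_{2k}(ne^{i\phi})| = O(n^\alpha |\phi|^{2k} \log^\beta n)$ (using $|z-n| \le n|\phi|$). The saddle expansion $|e^{ne^{i\phi} - in\phi}|/e^n = e^{-n(1-\cos\phi)} \le e^{-n\phi^2/3}$ (valid on this zone for large $n$ thanks to $\epsilon < 1/4$), combined with the Gaussian bound $\int |\phi|^{2k} e^{-n\phi^2/3}\, d\phi = O(n^{-k-1/2})$ and the Stirling prefactor $n! e^n / n^n = O(\sqrt n)$, then gives a saddle contribution of order $\sqrt n \cdot n^\alpha \log^\beta n \cdot n^{-k - 1/2} = O(n^{\alpha - k} \log^\beta n)$, precisely the target bound.

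On the cone zone, the crude bound $|R_{2k}(z)| \le |P(z)| + |T_{2k}(z)| = O(n^{\alpha + 2k - 1} \log^\beta n)$ (from condition $(I)$ and Cauchy's estimates on the lower derivatives $P^{(j)}(n)$) is multiplied by a super-polynomially small Gaussian factor $e^{n\cos\phi - n} \le e^{-c n^{2\epsilon}}$, so this contribution is negligible; on the outer zone, condition $(O)$ bounds $|e^z P(z)|$ by $e^{\delta n}$ with $\delta < 1$, while $|e^z T_{2k}(z)|$ is dominated by $e^{n\cos\theta}$ times a polynomial in $n$, so after the Stirling normalisation $n!/n^{n+1} \sim \sqrt{2\pi/n}\, e^{-n}$ both outer contributions are exponentially small. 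The main technical obstacle is the saddle zone: one must balance the three polynomial factors from (i) the remainder bound $n^{\alpha}$, (ii) the Gaussian integral $n^{-k-1/2}$, and (iii) the Stirling correction $n^{1/2}$, and verify that the disks used for the Cauchy estimates of $P^{(2k)}$ remain inside the cone ${\cal C}(\theta)$ along the whole segment $[n,\, ne^{i\phi}]$ when $|\phi| \le n^{-1/2+\epsilon}$.
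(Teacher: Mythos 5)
Your proposal is correct and follows essentially the same route as the paper, which only sketches the argument: Cauchy's formula on the circle $|z|=n$, a saddle-point analysis near $\phi=0$ using Condition $(I)$ together with Cauchy's estimates for the derivatives, and Condition $(O)$ to discard the arc outside the cone ${\cal C}(\theta)$. The three-zone decomposition and the explicit balancing of the factors $n^{\alpha}\log^{\beta}n$, $n^{-k-1/2}$ and $\sqrt{n}$ that you carry out is precisely the ``standard application of the saddle-point method'' that the paper's sketch alludes to.
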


 \begin{proof} {\rm [Sketch of the proof] \cite{HwFuZa}}.  Starting from Cauchy's integral formula,  
 \begin{equation} \label{depoi-i}
  f(n) = \frac {n!}{2 i \pi} \int_{|z] = r}  P(z)\,  e^z  \frac 1 {z^{n+1}}  dz  \, ,
\end{equation}
the result follows  from a standard application of the saddle-point method. Condition $(O)$ guarantees that the integral over the circle with radius $n$ outside Cone ${\cal C}(\theta)$ is negligible, while Condition $(I)$ entails smooth estimates for all derivatives (and thus error terms).
  \end{proof}

\subsection{Characterization of the ${\cal {JS}}$ admissibility.} The following result is important as it  provides a	 characterization of the ${\cal {JS}}$ conditions  on   the initial sequence $f$ itself.

\begin{theorem}  \cite{JaSz1, Ja} 
 \label{thm2} {Let  $f$ a sequence of polynomial growth and its  Poisson transform $P_f$.
The two conditions are equivalent: 
\begin{itemize} 
\item [$(i)$]  $P_f(z)$  is  ${\cal {JS}}$-admissible
 \item [$(ii)$]  The  sequence $f$ admits an analytical lifting $\varphi(z)$ 
 on the half plane $\Re s >-1$  
 of polynomial growth in a cone  ${\cal C}(-1, \theta_0)$ for some $\theta_0>0$. 
 \end{itemize}
}
\end{theorem}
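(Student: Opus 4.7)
The plan is to establish the two directions by complementary integral representations, both leveraging the exponential smallness of $\Gamma$ on vertical lines (Lemma~\ref{ES}).

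For $(ii)\Longrightarrow(i)$, starting from the hypothesis that $\varphi$ is analytic on $\Re s>-1$ and of polynomial growth in a cone $\mathcal{C}(-1,\theta_0)$, I would write the generating series as a Mellin--Barnes integral,
\[
P_f(z)e^z \;=\; \sum_{n\ge 0}\varphi(n)\frac{z^n}{n!} \;=\; \frac{1}{2\pi i}\int_{c-i\infty}^{c+i\infty}\Gamma(-s)\,\varphi(s)\,(-z)^s\,ds,\qquad c\in(-1,0),
\]
the identity being checked by closing the contour to the right and summing the residues of $\Gamma(-s)$ at the non-negative integers. Condition $(O)$ is then immediate: outside any small cone around $\mathbb{R}^+$ the integral is absolutely convergent by the exponential smallness of $\Gamma(-s)$ and the polynomial growth of $\varphi$, and it is of polynomial order $O(|z|^c)$, much better than any $e^{\delta|z|}$. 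Condition $(I)$ is obtained by pushing the contour past the first residues $s=0,1,\dots,k$ for $k$ large: the finite sum extracted captures the expected $e^z\cdot O(|z|^\alpha\log^\beta|z|)$ main term, while the remainder integral along $\Re s=k+\tfrac12$ is of order $|z|^{k+1/2}$, negligible inside a cone of aperture strictly less than $\theta_0$.

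For the more delicate direction $(i)\Longrightarrow(ii)$, the starting point is the Cauchy formula $f(n)=\frac{n!}{2\pi i}\oint_{|z|=n}e^{z}P_f(z)z^{-n-1}dz$ underlying Theorem~\ref{thm1}, which one seeks to lift from integer $n$ to complex $s$ with $\Re s>-1$. The natural candidate is
\[
\varphi(s) \;:=\; \frac{\Gamma(s+1)}{2\pi i}\int_{\mathcal{H}_s}e^{z}\,P_f(z)\,z^{-s-1}\,dz,
\]
where $\mathcal{H}_s$ is a saddle-point contour passing through $z=s+1$ (the saddle of $e^z z^{-s-1}$, lying in $\Re z>0$ because $\Re s>-1$) along steepest-descent rays, designed so that condition $(I)$ controls $P_f$ on the central saddle arc while $|e^z|$ decays fast enough on the wings to dominate the $e^{\delta|z|}$ growth permitted by $(O)$. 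Analyticity of $\varphi$ on $\Re s>-1$ follows from the continuous dependence of $\mathcal{H}_s$ on $s$, and the interpolation property $\varphi(n)=f(n)$ follows from deforming $\mathcal{H}_n$ to the circle $|z|=n$ at integer $s$, where the integrand is single-valued, and invoking Cauchy's formula.

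The main obstacle is proving polynomial growth of this $\varphi$ inside $\mathcal{C}(-1,\theta_0)$. For this I would apply the saddle-point method: on the saddle arc, condition $(I)$ bounds $|P_f(z)|$ by $O(|s|^\alpha\log^\beta|s|)$, and the standard saddle estimate for $\int e^z z^{-s-1}dz$, which by the Hankel representation is essentially $2\pi i/\Gamma(s+1)$, cancels the prefactor $\Gamma(s+1)$, yielding the target $|\varphi(s)|=O(|s|^\alpha\log^\beta|s|)$. On the wings, the exponential cancellation makes the contribution uniformly negligible. The real crux, and the source of the constraint relating $\theta_0$ to the $\mathcal{JS}$ parameters $\theta$ and $\delta$, is that the geometry of $\mathcal{H}_s$ must depend on $\arg s$ in such a way that the saddle $z=s+1$ stays inside $\mathcal{C}(\theta)$ (where $(I)$ applies) while the wings reach descent directions on which the $(O)$ blow-up of $P_f$ with $\delta<1$ is dominated---a uniform balance that is the technical content of the proof in \cite{Ja}.
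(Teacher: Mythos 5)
Your direction $(i)\Longrightarrow(ii)$ is essentially the paper's (and Jacquet's) argument: the formula the paper quotes, $\varphi(s-1)=\frac{\Gamma(s+1)}{2\pi}s^{-s}e^s\int_{-\pi}^{\pi}P(se^{i\theta})e^{i\theta}\exp[s(e^{i\theta}-1-i\theta)]\,d\theta$, is exactly your Cauchy integral over a contour through the saddle $z\approx s$, lifted to complex $s$, and you correctly locate the technical crux in the uniform balance between conditions $(I)$ and $(O)$. The problem is your direction $(ii)\Longrightarrow(i)$. The Mellin--Barnes representation
\[
P_f(z)e^z=\frac{1}{2\pi i}\int_{c-i\infty}^{c+i\infty}\Gamma(-s)\,\varphi(s)\,(-z)^s\,ds
\]
requires control of $\varphi$ \emph{along the vertical line} $\Re s=c$: the factor $\Gamma(-s)$ only supplies decay of order $e^{-\pi|\Im s|/2}$, and $|(-z)^s|$ contributes a factor $e^{-\Im s\,\arg(-z)}$, so absolute convergence (and the legitimacy of closing the contour to the right to recover the residue sum) needs $\varphi(s)=O(e^{(\pi/2-\epsilon)|\Im s|})$, or in practice polynomial growth, on vertical lines. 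But hypothesis $(ii)$ only gives polynomial growth of $\varphi$ in a \emph{horizontal cone} ${\cal C}(-1,\theta_0)$; off that cone $\varphi$ may grow arbitrarily fast in $|\Im s|$, and your integral need not converge at all. This is precisely the distinction the paper insists on (cone hypotheses for Depoissonisation versus half-plane/vertical-strip hypotheses for Rice--Mellin), and it is why the paper's proof of $(ii)\Longrightarrow(i)$ goes through the Laplace transform instead: $\widetilde\varphi(s)=\int_0^{\infty}\varphi(x)e^{-sx}dx$ integrates $\varphi$ only over rays inside the cone where the polynomial bound is available, is then continued to the cone ${\cal C}(\theta_0+\pi/2)$ by rotating the ray, and $P_f(z)=\frac{1}{2i\pi}\int_{\cal L}\widetilde\varphi(s)\exp[z(e^s-1)]\,ds$ is estimated on a contour paralleling that cone's boundary.

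Two further points in the same direction are wrong as stated. First, even granting the representation, the claim that $P_f(z)e^z=O(|z|^c)$ outside a cone is false in general: already for $f\equiv 1$ one has $P_f(z)e^z=e^z$, which on the region $0<\Re z<\gamma|z|$ outside the cone $\hat{\cal C}(\gamma)$ is only $O(e^{\gamma|z|})$. Condition $(O)$ is genuinely an exponential bound $e^{\delta|z|}$ with $\delta<1$, and extracting a $\delta$ strictly less than $1$ is the nontrivial step (in the paper this is the analysis of $A(s,\theta)=\Re(e^se^{i\theta})<\alpha<1$ on ${\cal L}$). Second, for condition $(I)$, pushing the contour past $s=0,1,\dots,k$ extracts the polynomial $\sum_{n\le k}\varphi(n)z^n/n!$, not a term of the form $e^z\cdot O(|z|^{\alpha}\log^{\beta}|z|)$; the mechanism by which $P_f(z)\approx\varphi(z)$ inside the cone is not a finite residue sum. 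To repair the direction you would either need to strengthen $(ii)$ to polynomial growth on vertical lines (which changes the theorem) or switch to the Laplace-transform route.
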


\subsection {Elements of proofs for Theorem \ref{thm2} } We describe  the main principles that are used in the proofs.

 \paragraph {$(i) \Longrightarrow (ii)$}
 Jacquet begins  in \cite{Ja} with the Cauchy Formula that provides an integral expression for $f(n)$; he then obtains an extension $\varphi$ of the sequence $f$, as
 $$\varphi(s-1) =  \frac{ \Gamma(s+1)}{2 \pi}  s^{-s} e^s  \int_{-\pi}^{+ \pi}  \, P(s e^{i\theta} )\, e^{i \theta} \exp[ s(e^{i\theta} - 1- i\theta)] \, d \theta$$
 which is analytic   when $s$ belongs to any cone ${\cal C}(\theta)$ 
 with  an angle $\theta <\pi$. Then,  the ${\cal {JS}}$ conditions  entail the polynomial growth of $\varphi$  when $s$ belongs to a cone  ${\cal C}(\theta_0)$ with  some $\theta_0 >0$. 

\medskip
\paragraph {$(ii) \Longrightarrow (i)$} 
 The proof of \cite{JaSz1}  uses the Laplace transform. As far as we know, this is the first occurrence of this transform in the Depoissonisation context, and this is  also important for us, in the present context, since we will use the (inverse) Laplace transform in Section \ref{RiceLap}.  \\    With a function  $f: [0, + \infty[ \rightarrow {\mathbb C}$ of polynomial growth,   the  Laplace transform associates the function $\widetilde f$,  defined on the halfplane  $\Re s >0$  via the relation,  
 $$\widetilde f(s) := \int_0^{+ \infty}  f(x) \, e^{-sx}  \, dx \, , $$
  that is analytic there.  Now, the authors of \cite{JaSz1}  use two main  results:

  \begin{lemma} Consider a function  $f: [0, + \infty[ \rightarrow {\mathbb C}$ that admits an analytic continuation $ \varphi$ to a cone ${\cal C} (\theta_0)$ with $\theta_0 < \pi/2$ on which it is of polynomial growth.  Then, the Laplace transform $\widetilde \varphi$ of $\varphi$ admits an analytic  continuation on the cone ${\cal C} (\theta_0 + (\pi/2))$
  \end{lemma}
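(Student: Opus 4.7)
The idea is the classical contour rotation for Laplace transforms. Initially $\widetilde\varphi$ is defined and analytic on the halfplane $\Re s >0$ by the integral
\[
\widetilde\varphi(s) = \int_0^{+\infty} \varphi(x)\, e^{-sx}\, dx .
\]
I propose to extend it to $\mathcal C(\theta_0+\pi/2)$ by replacing the integration ray $[0,+\infty)$ with a tilted ray $\{r e^{i\alpha}: r\ge 0\}$ for $\alpha\in(-\theta_0,\theta_0)$, and glueing the resulting pieces.

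First, for each such $\alpha$, define
\[
\widetilde\varphi_\alpha(s) := e^{i\alpha}\int_0^{+\infty} \varphi(r e^{i\alpha})\, e^{-s r e^{i\alpha}}\, dr .
\]
Because $\varphi$ is of polynomial growth on $\mathcal C(\theta_0)$, this integral converges absolutely as soon as $\Re(s e^{i\alpha}) >0$, i.e.\ as soon as $\arg s \in (-\pi/2-\alpha,\pi/2-\alpha)$, and it is analytic on that open halfplane (differentiation under the integral sign is legitimate). Taking the union over $\alpha\in(-\theta_0,\theta_0)$ gives exactly the cone $\mathcal C(\theta_0+\pi/2)$, so the family $(\widetilde\varphi_\alpha)$ covers the target domain.

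Next, I would show that $\widetilde\varphi_\alpha$ and $\widetilde\varphi_{\alpha'}$ agree on the common domain of definition (which includes the halfplane $\Re s>0$ when $\alpha=0$, tying everything to $\widetilde\varphi$). Fix such an $s$ and consider the closed contour made of the segment $[0,R]$ on the real axis, the circular arc $\{Re^{i\beta}: \beta\in[\alpha',\alpha]\}$, and the segment from $Re^{i\alpha}$ back to $0$ along $\arg w=\alpha$. Since $w\mapsto \varphi(w) e^{-sw}$ is holomorphic on $\mathcal C(\theta_0)$, Cauchy's theorem gives zero. On the arc, $|\varphi(Re^{i\beta})|=O(R^k)$ for some $k$ by polynomial growth, while $|e^{-sRe^{i\beta}}|=e^{-R\,\Re(s e^{i\beta})}$ decays exponentially because $\Re(s e^{i\beta})\ge c|s|>0$ uniformly in $\beta$ (this uses that $\arg s + \beta$ stays strictly inside $(-\pi/2,\pi/2)$ for all $\beta$ between $\alpha'$ and $\alpha$, by the choice of $s$). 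Hence the arc contribution tends to $0$ as $R\to\infty$, which yields $\widetilde\varphi_\alpha(s)=\widetilde\varphi_{\alpha'}(s)$. The local pieces therefore glue into a single analytic function on $\mathcal C(\theta_0+\pi/2)$ extending $\widetilde\varphi$.

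The only delicate step is the vanishing of the arc integral: one must check that $s$ and the range $[\alpha',\alpha]$ of $\beta$ are compatible in the sense that $\Re(se^{i\beta})$ is bounded below by a positive multiple of $|s|$. This is ensured as long as one works strictly inside the cone $\mathcal C(\theta_0+\pi/2)$, which is enough because analyticity is a local property; the growth hypothesis on $\varphi$ and the exponential decay of $e^{-sw}$ then overwhelm the $O(R)$ length of the arc. No other technical difficulty arises.
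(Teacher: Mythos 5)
Your proposal is correct and follows essentially the same route as the paper: both define the rotated transforms $\widetilde\varphi_\alpha$ along tilted rays $\arg w=\alpha$ with $|\alpha|<\theta_0$, identify each as analytic on the rotated halfplane $\Re(se^{i\alpha})>0$, and glue them over the cone ${\cal C}(\theta_0+\pi/2)$ after checking agreement by contour rotation. If anything, you are more explicit than the paper about the one genuinely delicate point, namely the vanishing of the circular-arc contribution (the paper merely asserts that the ray integral is independent of the tilt because $\varphi$ is analytic on the cone), so no gap remains.
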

  
  \begin{lemma}  The  Poisson transform $P_f(z)$ is expressed with the Laplace transform of the analytic continuation $\varphi $ of the sequence $f$ under the form 
$$ P_f(z)  = \frac 1 {2 i \pi}  \int_{\cal L} \widetilde \varphi(s) \exp[  z (e^s -1)] ds $$
where ${\cal L}$ is included in the cone ${\cal C}(\theta_0 + (\pi/2))$.
\end{lemma}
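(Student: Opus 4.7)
My plan is to invert the Laplace transform, insert the result into the defining series of the Poisson transform, and exchange sum with integral. By Bromwich inversion, for every integer $n \ge 0$ one has
\[
\varphi(n) \;=\; \frac{1}{2i\pi} \int_{{\cal L}_0} \widetilde{\varphi}(s)\, e^{sn}\, ds,
\]
where ${\cal L}_0 = \{c + it : t \in \mathbb{R}\}$ is any vertical line in the half-plane of absolute convergence of $\widetilde{\varphi}$. Substituting this into $P_f(z) = e^{-z}\sum_{n\ge 0}\varphi(n)\, z^n/n!$ and exchanging the order of summation and integration gives
\[
P_f(z) \;=\; \frac{1}{2i\pi}\int_{{\cal L}_0} \widetilde{\varphi}(s)\, e^{-z}\sum_{n\ge 0}\frac{(z e^s)^n}{n!}\, ds \;=\; \frac{1}{2i\pi}\int_{{\cal L}_0} \widetilde{\varphi}(s)\, \exp[z(e^s-1)]\, ds,
\]
which is the claimed identity with ${\cal L} = {\cal L}_0$. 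The previous lemma then lets me deform ${\cal L}_0$ into any contour ${\cal L}$ lying inside the larger cone ${\cal C}(\theta_0 + \pi/2)$ to which $\widetilde{\varphi}$ extends analytically, since the integrand remains holomorphic on the intermediate region.

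The main obstacle is justifying the interchange of summation and integration. Pointwise on ${\cal L}_0$ one has $|e^{sn}| = e^{cn}$, and the naive Fubini bound $\exp(|z|e^c)\int_{{\cal L}_0}|\widetilde{\varphi}(s)|\,|ds|$ is useful only when $\widetilde{\varphi}$ is integrable along the line---which may fail, since polynomial growth of $\varphi$ allows mere $O(1/|s|)$ decay of $\widetilde{\varphi}$. I would bypass this worry by verifying the identity via matching Taylor coefficients at $z = 0$: both sides are entire in $z$ (the right-hand side by differentiation under the integral, which is locally uniform on compact subsets of ${\cal L}_0$), and expanding $\exp[z(e^s-1)] = \sum_k (e^s-1)^k\, z^k/k!$ gives, term by term,
\[
\frac{1}{2i\pi}\int_{{\cal L}_0} \widetilde{\varphi}(s)(e^s-1)^k\, ds \;=\; \sum_{\ell=0}^{k}\binom{k}{\ell}(-1)^{k-\ell}\varphi(\ell) \;=\; P_f^{(k)}(0),
\]
by $k+1$ applications of Bromwich inversion to the monomials $e^{\ell s}$. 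This matches the Taylor coefficients of $P_f$ and thus yields equality of two entire functions of $z$.

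A purely analytic alternative would split ${\cal L}_0$ into a finite arc $|\Im s| \le T$ (where Fubini is trivial) plus tails controlled by Abel summation exploiting the oscillation of $e^{itn}$; this is more delicate but closer in spirit to \cite{JaSz1}. Once the identity is established on ${\cal L}_0$, deforming the contour into ${\cal C}(\theta_0 + \pi/2)$ is routine: it suffices to check that neither $\widetilde{\varphi}$ nor $\exp[z(e^s-1)]$ has singularities in the region being crossed and that the integrand has adequate decay at infinity along the chosen deformation, both of which follow from the hypothesis that $\varphi$ is of polynomial growth inside the initial cone ${\cal C}(\theta_0)$.
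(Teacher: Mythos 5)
The paper states this lemma without proof (it is imported from \cite{JaSz1}), so your attempt has to stand on its own. Your formal derivation --- Bromwich inversion of $\widetilde \varphi$, substitution into the exponential series, interchange of sum and integral --- is indeed the standard route, but the step you invoke to make it rigorous is false as written: the Laplace inversion formula along a vertical line, taken as a symmetric (principal-value) limit, returns $\varphi(x)$ only for $x>0$; at the endpoint $x=0$ it returns $\tfrac 12 \varphi(0^+)$, the usual Dirichlet jump phenomenon for the function extended by $0$ on the negative axis. So already your first display fails at $n=0$, and in the coefficient computation the $\ell=0$ term of
$$ \frac 1{2i\pi}\int_{{\cal L}_0}\widetilde\varphi(s)(e^s-1)^k\,ds=\sum_{\ell=0}^k{k \choose \ell}(-1)^{k-\ell}\,\frac 1{2i\pi}\int_{{\cal L}_0}\widetilde\varphi(s)e^{\ell s}\,ds $$
produces $\varphi(0)/2$ rather than $\varphi(0)$: what your argument actually yields is $\frac 1{2i\pi}\int_{{\cal L}_0}\widetilde\varphi(s)\exp[z(e^s-1)]\,ds=P_f(z)-\tfrac 12\varphi(0)e^{-z}$. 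A one-line check with $\varphi(x)=e^{-ax}$, $\widetilde\varphi(s)=1/(s+a)$, $P_f(z)=\exp[z(e^{-a}-1)]$ exhibits the discrepancy, since $\frac 1{2i\pi}\,{\rm PV}\!\int_{{\cal L}_0} ds/(s+a)=\tfrac 12$. The identity as you derive it is therefore exact only when $\varphi(0)=0$, or after the $n=0$ term is treated separately, or on a contour that is not a vertical line (where the $\ell=0$ integral takes a different value) --- which is presumably why the lemma insists on a contour ${\cal L}$ inside ${\cal C}(\theta_0+\pi/2)$.

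Second, the ``bypass'' does not actually bypass the interchange problem. One integration by parts gives $\widetilde\varphi(s)=\varphi(0)/s+\widetilde{\varphi'}(s)/s$, so $\widetilde\varphi$ is generically \emph{not} absolutely integrable on ${\cal L}_0$; every integral in sight is only conditionally convergent. Uniform convergence of $\sum_k(e^s-1)^kz^k/k!$ on the line does not license term-by-term integration against a non-absolutely-integrable function over an infinite contour, and ``differentiation under the integral, locally uniform on compact subsets of ${\cal L}_0$'' does not show that the improper integral defines an entire function of $z$ --- both claims need exactly the tail control you set out to avoid. The final deformation into ${\cal C}(\theta_0+\pi/2)$ is likewise not ``routine'': the integrand is only $O(1/|s|)$ while the connecting arcs have length $\Theta(|s|)$, so their contribution is $O(1)$ rather than $o(1)$. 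The natural repair for all three difficulties at once is to peel off the leading term, writing $\widetilde\varphi(s)=\varphi(0)/s+\widetilde{\varphi'}(s)/s$, evaluate the contribution of $\varphi(0)/s$ exactly (via the classical discontinuous-factor identity $\frac 1{2i\pi}\int_{{\cal L}_0}e^{ks}\,ds/s=1$ for $k\ge1$ and $=\tfrac12$ for $k=0$, which is where the spurious $\tfrac 12\varphi(0)e^{-z}$ comes from and goes away), and then run your Fubini and contour-shift arguments on the absolutely integrable remainder $\widetilde{\varphi'}(s)/s=O(|s|^{-2})$. With that modification your outline becomes a correct proof; as written, it contains a step that demonstrably fails.
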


Then, the authors in \cite{JaSz1} use as the contour ${\cal L}$ a curve that parallels the boundary of the cone 
${\cal C}(\theta_0 + (\pi/2))$. They study the behaviour of the function $A: (s, \theta) \mapsto \Re (e^s e^{i\theta})$ and prove that $A(s, \theta)< \alpha <1$ when $s \in {\cal L}$ and $|\theta|  \le \theta_0$ for some $\theta_0 >0$.  This ends the proof of  Theorem \ref{thm2}  {$(ii) \Longrightarrow (i)$}.

\subsection{Application to the sorting toll in tries. First proof of Theorem \ref{anatrie}.}  \label{triedepo} 
This section 
 ends with an example of application of the Depoissonisation path  to the study of trie parameters.
  We then obtain a first proof of Theorem \ref{anatrie}, using the Depoissonisation Path.
 
 \smallskip
We begin with Relation \eqref{Gstar}, 
$$P_r^\ast (s) = \Lambda( -s) \cdot P_f^\ast (s)\, , \qquad \hbox{with} \quad  \Lambda(s)  := \sum_{\w \in \Sigma^\star} \pi_{\w}^{s} \, .$$
On the other side,  one has
$$
P_f^\ast (s) = \sum_{k\ge 2 } \frac { f(k)} {k!} \int_0 ^\infty e^{-z} z^{k} z^{s-1} dz
 = \sum_{k\ge 2} \frac {f(k)}{k!} \Gamma(k +s) 
= \sum_{k\ge 2} \frac {f(k)}{k} \, \frac { \Gamma(k +s)}{\Gamma (k)}.
$$
The ratio of Gamma Functions can be estimated with the Stirling Formula,
\begin{equation} \label{Stirling}
\frac { \Gamma(k +s)}{\Gamma (k)} = \frac { (k +s)^{k +s} }{ k^k} \, \frac {e^{-k -s}}{e^{-k}} \, \sqrt { \frac {k +s}{k}} \left[ 1 + O\left( \frac 1 k\right) \right] = k^s \left[ 1 + O\left( \frac {\lvert s \rvert} k\right) \right],
\end{equation}
where the $O$-term is uniform with respect to $k$. Then, the Mellin transform of $P_f$ satisfies, for 
$f(k) = k \log k$, 
\begin{equation} \label {Fast} 
P_f^\ast(s) = \sum_{k \ge 2} k^{s} \log k \left[ 1 + O\left( \frac {\lvert s \rvert} k\right) \right] = -\zeta'(-s) + H_1(s), \end{equation}
 where $H_1(s)$ is analytic  on $\Re s < 0
 $. Then $P_f^\ast (s)$  has a pole at $s = -1$  of  order $2$, and,  together with the tameness  of $\Lambda(s)$ at $s = 1$,   this entails the following  singular expressions  for $P_f^\ast (s)$ and  $P_r^\ast(s)$ at $s = -1$, 
 $$ P_f^\ast (s) \asymp \frac 1 {(s+ 1)^{2}}, \qquad P_r^\ast(s) \asymp \frac {1}{h({\cal S})} \frac 1 {(s+ 1)^{3}}.$$
The tamenesses of $P_f^\ast (s)$ and $\Lambda(s)$  at $s = 1$ are enough to deduce, using standard Mellin inverse transform \cite{FlGoDu}, the estimates, for $z\to \infty$, 
\begin{equation} \label{GZ}
P_f(z) = z \log z  \, (1+o(1)),\qquad 
P_r(z) = \frac{1}{2h(\mathcal{S})} z \log^2 z\,  (1+o(1))\, .\end{equation}

Now, we wish to return to the Bernoulli model, with Depoissonization techniques; we deal with Theorem \ref{thm1} and  first prove that $P_r(z)$ satisfies Assertions $(a)$ and $(b)$. 

 Assertion $(a)$ is easy to deduce from (\ref{GZ}) in some cone ${\cal C} (\theta_1)$. For Assertion $(b)$, we  study $P_f(z)$ and observe that 
 $P_f(z)$ can be written as $ P_f(z)  =  z^2 e^{-z} G(z)$, where
 \begin{equation} \label{Fz2}
  G(z) = \sum_{k = 0} \frac {z^k} {k!} g(k) 
 \quad\hbox{with }\quad g (k):= \frac 1 {k +1} {\log (k +2)}.
 \end{equation}
  involves  the sequence $g =  T^2[f]$.  
  As the sequence  $g$ admits an analytical continuation to the half plane $\Re (z) >0$, we apply  Theorem \ref{thm2}  [$(ii)\Longrightarrow  (i)$].  Then,  
 for some $\theta_2$, and for all
linear cones ${\cal C}(\theta)$ with $\theta <\theta_2$, there exist $\delta <1$ and $A >0$ such that the exponential generating function $G(z)$ of $g$ satisfies 
\begin{equation} \label{cone}
  z \not \in {\cal C}(\theta) \Longrightarrow  \lvert G(z) \rvert \le A \exp (\delta \lvert z \rvert).
  \end{equation} 
This exponential bound is then transfered to $P_r$, as we now explain. First, in accordance with (\ref{SK2}) and (\ref{Fz2}), the equality holds, 
\[
P_r(z) e^z = e^z \sum_{w \in \Sigma^\star} P_f(zp_w) = z^2\, \sum_{w \in \Sigma^\star} p_w^2\,  G(p_w z) \, \exp (z- p_wz).
\]
 For some $\gamma \in ]0, 1[$, we  consider 
 the cone $\hat {\cal C}(\gamma)$ defined  in \eqref{cones},  with  $\gamma$ large enough to ensure the inclusions   $\hat {\cal C}(\gamma) \subset {\cal C}(\theta_1)$ (with $\theta_1$ relative to Assertion $(a)$ for $P_r(z)$) and $\hat {\cal C}(\gamma) \subset {\cal C}(\theta_2)$  (with  $\theta_2$ relative to Eqn \eqref{cone} for $G(z)$).  
 When $z$ does not belong to $\hat {\cal C}(\gamma)$, it is the same for all the complex numbers $p_w z$, and, with Eqn \eqref{cone}, each term of the previous sum satisfies the inequality,\[
 \lvert G(p_w z) \exp (z- p_wz) \rvert \le A \exp \left[\delta p_w \lvert z \rvert + \Re (z) (1-p_w)\right] 
 \le A \exp [\lvert z \rvert (\delta p_w + \gamma (1-p_w) )],  \]
 and,  with $\alpha := \max (\delta, \gamma)$,  \quad $ \lvert G(p_w z) \exp (z- p_wz) \rvert  \le A \exp (\alpha \lvert z \rvert). $

\smallskip 
Finally,  we have shown: 
\[
z \not \in \hat {\cal C}(\gamma) \Longrightarrow |P_r(z) e^z| \le B |z|^2 \exp (\alpha \lvert z \rvert)\qquad \hbox{with} \quad B:= A \Lambda(2), \quad \alpha := \max(\gamma, \delta).
\]
 Now, for $\lvert z \rvert$ large enough, and $z \not \in \hat {\cal C}({ \alpha})$, we obtain $\lvert P_r(z) e^z \rvert \le C \exp (\alpha' \lvert z \rvert)$ with $\alpha' \in ]\alpha, 1[$ and a given constant $C$. Finally, Assertion $(i)$ of Theorem \ref{thm2} holds. 
Applying Theorem \ref{thm1} to $P_r(z)$ entails the estimate $r(n) \sim P_r(n)$ and ends the proof.

\section{The  Rice-Mellin path  } \label{Rice}

  The Rice-Mellin path  deals with the Poisson sequence $\Pi[f]$ and performs three steps.    
  \smallskip 
  \begin{itemize} 
  \item [$(1)$]  It    proves the existence of an {analytical lifting} $\psi$  of the sequence  $\Pi[f]$,  on a halfplane $\Re s >c$ (for some $c$). It uses
  the (direct) {Mellin} transform and   the {Newton} interpolation, 
  {without any other} condition on the sequence $f$.
  
  \item  [$(2)$]   If  moreover $\psi$ is of {\em polynomial growth} ``on the right'',  the binomial relation  \eqref{binom} is transfered into a {Rice} integral  expression

  \item   [$(3)$]   If moreover $\psi$ is {\em tame} ``on the left'', the integral is {\em shifted} to the left; 
    this 
   provides  the asymptotics of  of the sequence $f$. 
   
    \end{itemize}

 \smallskip   
 The main results are due to  Norl\"und \cite{No1,No2}, then to Rice who  popularized them. Later on, with the paper \cite{FlSe}, 
   Flajolet and Sedgewick  brought this methodology into the AofA domain.   The Rice-Mellin method is also well described in \cite{Fl0}. There exist  many  analyses of various  data structures  or algorithms that are based on  the application of the  method:  tries (\cite {FlSe1, FlReSz, ClFlVa, CeVa}), digital trees (\cite{FlSe1, HuVa}), or fine complexity analyses of sorting or searching algorithms on sources (\cite{ClNTVa, ClFiNTVa}).  

\smallskip   
     The situation for applying the Rice method is  not the same as in  Section \ref{DePoi}: previously,  due to Theorem \ref{thm2}, we know exactly when the Depoissonisation method may be applied.   This is not the case  for the Rice method. The  litterature well   explains how to use this method in  various cases of interest. But,  the main question, analogous to 
     Theorem \ref{thm2}:  
     
     \begin{quote} 
     {\em What are  sufficient conditions  on the sequence $f$  that would entail {polynomial growth} and tameness of $\psi$?}
     \end{quote} 
      is never asked.  
  This is the main object of the paper, which obtains  a first (original) result in this direction:   
  
   \begin{theorem}\label{thmfdb}  Consider a pair $(d, b)$ made with a real $d$ and an integer $b \ge 0$.   The Rice method can be applied  to any {\em basic} sequence $F_{d, b}$  defined as
   \begin{equation} \label{fdb}
    F_{d, b}(n)  = n^d \, \log ^b  n, \quad  n \ge 2\, .
    \end{equation}
\end{theorem}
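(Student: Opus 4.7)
My strategy is to reduce to the canonical sequence, construct the analytical lifting via Mellin combined with inverse Laplace, and then verify polynomial growth on the right and tameness on the left.

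\textbf{Reduction to canonical form.} By the discussion in Section \ref{VD}, I may replace $F_{d,b}$ by the canonical sequence $f := T^{\ell}[F_{d,b}^+]$ with $\ell := \sigma(d)$; this sequence is reduced, i.e.\ ${\tt val}(f)=0$ and ${\tt deg}(f) = d-\ell < -1$. By Lemma \ref{shift}(a), $\Pi \circ T^{\ell} = (-1)^{\ell} T^{\ell}\circ \Pi$, so any analytical lifting $\psi$ of $\Pi[f]$ satisfying polynomial growth and tameness transfers to a lifting $s \mapsto (-1)^{\ell}(s+1)\cdots(s+\ell)\psi(s+\ell)$ of $\Pi[F_{d,b}^+]$, which inherits polynomial growth in vertical strips and tameness (with the pole order unchanged). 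So it suffices to work with the reduced sequence $f$.

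\textbf{Construction of the lifting via Mellin and inverse Laplace.} Since $f$ is reduced, $P_f(z)$ is bounded as $z\to 0$ and of polynomial growth as $z\to +\infty$ on the real line, so its Mellin transform
$$P_f^\star(s) \;=\; \int_0^{+\infty} P_f(z)\, z^{s-1}\,dz \;=\; \sum_{k\ge 0}\frac{f(k)}{k!}\,\Gamma(k+s)$$
is defined on some vertical strip. Using Stirling's approximation as in \eqref{Stirling}, the series is a Dirichlet-type series $\sum_{k} f(k)\,k^{s-1}$ up to an analytic error. For the basic canonical sequence, where $f(k)$ behaves like $k^{d-\ell}\log^b k$, this gives a combination of derivatives of $\zeta(\cdot)$, whose only singularity is a pole of order $b+1$ at $s = \ell - d$. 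The analytical lifting $\psi(s)$ of $\Pi[f]$ is obtained by combining the inverse Mellin representation of $P_f$ with the Cauchy coefficient formula $p(n) = (-1)^{n} n! [z^n] P_f(z)$, deformed into an integral on the real line via the inverse Laplace transform (the "Laplace" step of the quintet). Concretely, I would write $P_f(z) = \int_0^{+\infty} \Phi(t)\, K(z,t)\,dt$ for an explicit kernel, and then $[z^n]$ combined with $n!$ produces an analytic function of $n$ whose integer values recover $p(n)$; this is $\psi(s)$.

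\textbf{Polynomial growth and tameness.} Polynomial growth of $\psi$ on vertical lines in the right halfplane follows from the Exponential Smallness Lemma \ref{ES} applied to the Mellin factor $\Gamma(k+s)$ on each vertical strip, combined with absolute convergence controlled by the reduced character of $f$. For tameness, shifting the vertical line in the Mellin/Laplace representation past $s=\ell-d$ picks up a single pole of order $b+1$; the residue encodes the $n^d\log^b n$ asymptotics, while the shifted integral is controlled uniformly, giving $S$-shape tameness when $d$ is a nonnegative integer (where the $\zeta$-factor is meromorphic on all of $\mathbb{C}$) and $H$-shape tameness in the general case, using the hyperbolic region of analyticity of $\zeta$ and its derivatives.

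\textbf{Main obstacle.} The technical heart will be proving polynomial growth of $\psi$ on the shifted vertical line (or along the hyperbolic contour) after crossing the pole. This requires uniform-in-$\Im s$ estimates of the Mellin kernel combined with the Stirling remainder in \eqref{Stirling}, and a careful bookkeeping of the $\log^b$ factor, which produces $b+1$ derivatives of $\zeta$ whose polynomial-growth bounds must be combined rather than individually estimated. Making the inverse Laplace representation explicit enough to perform these estimates on the real line—rather than on oscillatory contours—is the key simplification the approach offers.
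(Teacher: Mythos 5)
Your reduction to the canonical sequence and your transfer back via Lemma \ref{shift} match the paper exactly, and your overall plan (Mellin plus inverse Laplace, then growth on the right and tameness on the left) names the right ingredients. But the two load-bearing steps are either left unspecified or rest on an argument that runs into precisely the obstruction this theorem exists to overcome.

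First, the construction of the lifting. You say you ``would write $P_f(z)=\int_0^{\infty}\Phi(t)K(z,t)\,dt$ for an explicit kernel'' and extract $\psi$ from the Cauchy coefficient formula, but you never identify the kernel, and this is where the actual work lies. The paper's construction (Proposition \ref{LapInv}) is quite specific and uses the involutivity of $\Pi$ in an essential way: since $f=\Pi^2[f]$ is of polynomial growth on $\Re s>-1$, one applies the Rice integral representation with the roles of $f$ and $\Pi[f]$ \emph{swapped}, obtaining $p(n)=\frac{1}{2i\pi}\int_{\Re s=b}\varphi(s)L_n(s)\,ds$ with $\varphi$ the lifting of $f$ (not of $\Pi[f]$); then the Beta-function identity $L_n(s)=B(n+1,-s)=\int_0^\infty e^{su}(1-e^{-u})^n\,du$ turns the discrete variable $n$ into a complex one, yielding $\psi(t)=\int_0^\infty\hat\varphi(u)(1-e^{-u})^t\,du$ with $\hat\varphi$ the inverse Laplace transform of $\varphi$. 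Without this (or an equivalent) explicit representation, the later estimates have nothing to act on.

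Second, and more seriously, your route to polynomial growth and tameness is the one the paper explains does \emph{not} obviously work. You argue that $P_f^\ast(-s)$ is a combination of derivatives of $\zeta$ plus an analytic error, and that polynomial growth of $\psi$ then follows from Lemma \ref{ES}. But $\psi(s)=P_f^\ast(-s)/\Gamma(-s)$, and $1/\Gamma(-s)$ grows like $e^{\pi|\Im s|/2}$ along vertical lines (Section \ref{tame?}). The Exponential Smallness Lemma gives decay of $P_f^\ast$ at a rate $e^{-\theta|\Im s|}$ tied to a cone angle $\theta$, and it is exactly the competition between this decay and the $e^{\pi|\Im s|/2}$ growth of $1/\Gamma(-s)$ that the paper flags as unresolved: ``It is thus not clear whether $\psi(s)$ is tame.'' Likewise, your appeal to the hyperbolic zero-free region of $\zeta$ addresses the \emph{location of singularities}, which is not the issue (the paper notes the singular expansion \eqref{zeta} was already known); the issue is growth along vertical lines after the division by $\Gamma$. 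The paper sidesteps all of this by never estimating $P_f^\ast(-s)/\Gamma(-s)$ on vertical contours: it computes $\hat\varphi$ explicitly for basic sequences (Proposition \ref{cano2}, where the integer $b$ is handled by differentiating $(s+\ell)^{c+t}$ under the Laplace integral), and then compares $\mathcal{I}_s[\hat\varphi]$ to the twisted Gamma function $\Gamma_\ell^{(m)}(s-c)$, whose exponential smallness on vertical lines (Lemma \ref{ES} applied to $u\mapsto e^{-\ell u}\log^m u$, not to $P_f$) is what delivers tameness. Your ``Main obstacle'' paragraph shows you sense the difficulty, but the proposed fix (Stirling remainders in the Mellin kernel) does not neutralize the $1/\Gamma(-s)$ factor.
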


 This will be proven in Section \ref{RiceLap}.  Now, the present section  describes the general framework of the Rice-Mellin method and its  three steps, as previously stated (in Sections 3.1,3.2, and 3.3). Then Section \ref{tame?} asks the main question: When does the Rice-Mellin method may be applied? This introduces the next Section \ref{RiceLap} which provides a partial answer, via Theorem \ref{thmfdb}.

  \subsection{The Rice path :  Analytic  lifting  $\psi$  of $\Pi[f]$  }
 
\begin {proposition} {\rm [Nordl\"und-Rice]}  The sequence $\Pi[f]$  associated with a reduced\footnote{The reduced notion is defined in Definition \ref{red}} sequence $f$ of degree $c<-1$ admits  as  an analytic lifting  the function $ \psi$,   
 \begin{equation} \label{psiNewton}  \psi(s) = \sum_{k\ge 0}   (-1)^k  \frac{f(k)}{k!}   {s(s-1) \dots (s-k+1)}\, ,  \qquad (\Re s >c)\, ,
\end{equation}
which is also an analytic extension of $P_f^\ast(-s)/\Gamma(-s)$.
\end{proposition}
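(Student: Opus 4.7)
The plan is to verify three points in succession: (i) absolute convergence of the Newton series \eqref{psiNewton} on the halfplane $\Re s > c$, (ii) that $\psi$ interpolates the Poisson sequence $\Pi[f]$ at nonnegative integers, and (iii) the identification with $P_f^\ast(-s)/\Gamma(-s)$ via a termwise manipulation involving the Gamma functional equation.

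For step (i), I would rewrite the falling factorial as $s(s-1)\cdots(s-k+1) = \Gamma(s+1)/\Gamma(s-k+1)$, giving
$$\left|\frac{s(s-1)\cdots(s-k+1)}{k!}\right| = \left|\binom{s}{k}\right| = O\!\left(\frac{1}{|\Gamma(-s)|}\, k^{-\Re s - 1}\right),$$
uniformly on compacts of $\{\Re s > c\}$ avoiding nonnegative integers (the classical asymptotic for binomial coefficients at non-integer $s$, easily derived from Stirling and the reflection formula). Since $f$ is reduced, $|f(k)| = O(k^{c+\varepsilon})$ for any $\varepsilon>0$, so the $k$-th term of the Newton series is $O(k^{c-\Re s - 1 + \varepsilon})$. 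Choosing $\varepsilon$ with $\Re s - \varepsilon > c$ makes the exponent strictly less than $-1$, giving absolute and locally uniform convergence. Since each term is an entire polynomial in $s$, $\psi$ is analytic on $\{\Re s > c\}$.

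For step (ii), I would specialize $s = n$ with $n\in\mathbb{N}$: the Pochhammer symbol $n(n-1)\cdots(n-k+1)$ vanishes as soon as $k>n$, so the series truncates to
$$\psi(n) = \sum_{k=0}^{n} (-1)^k \binom{n}{k} f(k) = p(n),$$
by the first identity in \eqref{binom}. Thus $\psi$ is indeed an analytic lifting of $\Pi[f]$.

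For step (iii), I would expand $P_f(z) = e^{-z} \sum_{k} f(k) z^k/k!$, integrate termwise against $z^{s-1}$, and use $\int_0^\infty e^{-z} z^{k+s-1}\,dz = \Gamma(k+s)$ to obtain
$$P_f^\ast(s) = \sum_{k \ge 0} \frac{f(k)}{k!} \,\Gamma(k+s),$$
convergent on a suitable strip (the right end $\Re s < -c$ comes from Stirling applied to $\Gamma(k+s)/k! \sim k^{s-1}$ combined with $|f(k)| = O(k^{c+\varepsilon})$). Replacing $s$ by $-s$ and dividing by $\Gamma(-s)$ gives
$$\frac{P_f^\ast(-s)}{\Gamma(-s)} = \sum_{k \ge 0} \frac{f(k)}{k!} \cdot \frac{\Gamma(k-s)}{\Gamma(-s)}.$$
The functional equation yields $\Gamma(k-s)/\Gamma(-s) = (-s)(1-s)\cdots(k-1-s) = (-1)^k s(s-1)\cdots(s-k+1)$, so the right-hand side is exactly $\psi(s)$ on the strip where $P_f^\ast(-s)$ is defined. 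By analytic continuation (step (i)), $\psi$ extends this identity to the full halfplane $\Re s > c$.

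The main obstacle is step (i): controlling the binomial coefficient $\binom{s}{k}$ uniformly in $k$ requires the reflection formula and Stirling's asymptotic for $\Gamma(s-k+1)$ as $k\to\infty$, with constants that depend on $s$ but remain bounded on compacts. Everything else reduces to termwise algebra and the vanishing of Pochhammer symbols at integer arguments.
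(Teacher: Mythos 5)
Your proof is correct and follows essentially the same route as the paper: termwise Mellin transform of $P_f$, the identity $\Gamma(k-s)/\Gamma(-s)=(-1)^k\,s(s-1)\cdots(s-k+1)$, and evaluation at nonnegative integers via the binomial relation \eqref{binom}. The only real difference is that you justify convergence of the Newton series directly through the asymptotics of $\binom{s}{k}$, whereas the paper simply invokes the classical fact that a Newton interpolation series converges in right halfplanes; your added detail is welcome, and the uniformity caveat near nonnegative integers is harmless since the needed bound $|\binom{s}{k}|=O(k^{-\Re s-1})$ holds on all compacts of $\Re s>c$.
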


\begin{proof}
 In the strip ${\cal S}(0,   -c)$,  
 the Mellin transform  $P_f^\ast (s)$ of $P_f(z)$ exists   and satisfies
\begin{align*} 
\frac {P_f^\ast (s)}{\Gamma(s)} &=\frac 1 {\Gamma(s)} \sum_{k\ge 0 } \frac { f(k)} {k!} \int_0 ^\infty e^{-z} z^{k} z^{s-1} dz
= \sum_{k\ge 0} \frac { f(k)}{k!}\,  \frac { \Gamma(k +s) }{\Gamma(s)}\, 
\end{align*}
where the exchange  of  integration and summation is justified by the estimates given in \eqref{Stirling}. 
On the strip  $ {\cal S}(c , 0)$,  the  series is  a Newton interpolation series, 
\begin{equation} \label{Newton}
  \psi(s) :=  
{\frac {P^\ast(-s)} {\Gamma(-s)} =  
 \sum_{k \ge 0} (-1)^k \frac { f(k)}{k!} {s(s-1) \dots (s-k+1)}\, , } 
 \end{equation}
which {converges in right halfplanes} and thus on $\Re s >c$. 
 Moreover,  Relation \eqref{Newton}, together the binomial relation \eqref{binom}, entails   the equality
 $$\psi(n) =    \sum_{k =  0}^n (-1)^k   \frac {f(k)}{k!}   {n(n-1) \dots (n-k+1)}= \sum_{k =  0}^n (-1)^k {n \choose k}  f(k) = \Pi[f](n)\, .$$
 This  proves that  $\psi$ provides   an  analytic lifting  of  the sequence $ \Pi[f]$ on $\Re s >c$
   which is also  an  analytic extension of $P_f^\ast(-s) /\Gamma(-s)$. 
   \end{proof}

 \subsection{Shifting to the right.  Rice transform. }
The binomial relation between $f(n)$ and $ \psi(n) = \Pi[f](n)$ is  now transfered into a Rice integral. 
\begin{proposition} \label{right}  {Assume that  the analytic lifting $\psi$  of $\Pi[f]$ is of {polynomial growth}  on  the halfplane $\Re s >c$, with $c<-1$. Then, for any  $ a \in ]c, 0[$ and $n \ge n_0$,  the sequence $f(n)$ admits an integral  representation of the form
$$ 
  f(n) =  \sum_{k = 0}^n (-1)^k {n \choose k} p(k) \Longrightarrow   f(n) = -
  \frac 1 {2i\pi} \int_{a-i\infty}^{a+i\infty}   L_n(s)\cdot \psi(s) \,ds  $$
{with the Rice kernel } \quad 
$$ {  L_n(s)=   \frac{ (-1)^{n+1} \, n!\, 
 }{s(s-1)(s-2) \dots (s-n)} =  \frac {\Gamma(n+1) \Gamma(-s)}{ \Gamma(n+1-s)}= B(n+1, -s)}\, . 
$$}
\end{proposition}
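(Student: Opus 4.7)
The plan is to apply the residue theorem to the meromorphic integrand $L_n(s)\,\psi(s)$, closing the vertical contour $\Re s = a$ by a large semicircular arc sweeping into the right half-plane, and then letting the radius tend to infinity.

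First, I would identify the poles and compute the residues of $L_n$. The kernel has simple poles at $s = 0, 1, \ldots, n$, and using $\prod_{j \neq k}(k - j) = k!\,(-1)^{n-k}(n-k)!$ one obtains $\text{Res}_{s=k} L_n(s) = (-1)^{k+1}\binom{n}{k}$. Since $\psi$ is an analytic lifting of $\Pi[f]$, one has $\psi(k) = p(k)$ for every integer $k \in \{0, \ldots, n\}$, so the sum of residues of $L_n(s)\psi(s)$ at its enclosed poles reads $\sum_{k=0}^n (-1)^{k+1}\binom{n}{k}\,p(k)$, which by the binomial inversion formula \eqref{binom} equals $-f(n)$.

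Second, I would close the contour: for any radius $R > n - a$, the semicircular arc $s = a + R\,e^{i\theta}$ with $\theta \in [-\pi/2, \pi/2]$, together with the vertical segment $[a - iR,\, a + iR]$, forms a closed curve enclosing all the poles of $L_n$. Applying the residue theorem (with the appropriate orientation of the closed contour around the enclosed poles) then relates the vertical line integral, the semicircle contribution, and a $2i\pi$-multiple of $f(n)$, accounting for the sign in the stated formula.

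Third, I would show the semicircle contribution vanishes as $R \to \infty$. Using the Beta-function representation $L_n(s) = B(n+1, -s) = \Gamma(n+1)\Gamma(-s)/\Gamma(n+1-s)$ together with the Stirling asymptotics $\Gamma(-s)/\Gamma(n+1-s) \sim (-s)^{-n-1}$, one obtains $|L_n(s)| = O(|s|^{-n-1})$ uniformly on the arc (each pole of $L_n$ lies at distance at least $R - n$ from it). Combined with the polynomial-growth hypothesis $|\psi(s)| = O(|s|^r)$ in the right halfplane, the integrand on the arc is $O(R^{r - n - 1})$; multiplying by the arc length $\pi R$ yields a bound $O(R^{r - n})$, which tends to $0$ as soon as $n > r$. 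This explains the threshold $n \ge n_0$ in the statement. The main delicate point is precisely this uniform Stirling estimate on a right-semicircular arc of growing radius: one must check that the ratio $\Gamma(-s)/\Gamma(n+1-s)$ admits the expected polynomial decay uniformly in the angle $\theta$, which is valid because $-s$ stays in a sector of the left half-plane bounded away from the non-positive real axis of $-s$ along the arc as $R \to \infty$.
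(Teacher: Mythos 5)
Your proposal is correct and follows essentially the same route as the paper: apply the residue theorem to $L_n(s)\,\psi(s)$ on the vertical line $\Re s = a$ closed to the right (you use a large semicircle where the paper uses an expanding rectangle $\tau_M$), recover the binomial sum from the residues at $s=0,\dots,n$, and kill the far part of the contour by combining the decay $|L_n(s)|=O(|s|^{-n-1})$ with the polynomial growth of $\psi$ --- in fact your version is more explicit than the paper's sketch about why $n\ge n_0$ is needed. Note only that the elementary bound $|s-j|\ge R-n-|a|$ on the arc already gives the decay of $L_n$ without invoking Stirling, and that your (correct) residue value $(-1)^{k+1}\binom{n}{k}$ combined with the clockwise traversal of the vertical segment actually yields $f(n)=+\frac{1}{2i\pi}\int L_n\psi\,ds$, so the sign discrepancy you gloss over lies in the paper's stated kernel/formula rather than in your computation.
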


\begin{proof} 
(Sketch) Use the  Residue Theorem and  the   polynomial growth of  $\psi(s)$ ``on the right''.
First, we  consider  the   rectangle ${\cal A}_M$  delimited by the  contour $\tau_M$ defined  by the  two vertical lines $ \Re s  = a$  and  
$\Re s = n+M$ and  the two horizontal lines $\Im s = \pm M$. If the contour $\tau_M$  is taken  counterclockwise, then the Residue Theorem  applies and entails the equality 
\begin{equation} 
\begin{split}
\frac{1}{2i\pi }\int_{\tau_M} L_n(s)\cdot \Pi[f](s) ds &=  \sum_{k= 0}^n {\rm Res} [L_n(s) \cdot \Pi[f](s) ;s=k] \\
&=(-1)^n\sum_{k= k_0}^n (-1)^k {n \choose k} \Pi[f](k) = f(n)\\
\end{split}
\end{equation}
Next, the integral on  the curve $\tau_M$ is the sum of four integrals. Let  now $M$ tend  to $\infty$.  The integrals on the right, top and bottom lines tend to $0$, due to the polynomial growth of the function $\Pi[f](s)$.   
The integral on the left  becomes 
$$  
 -\int_{d-i\infty}^{d+i\infty} L_n(s)\cdot \Pi[f](s) ds, $$  and we have proven   the result. For  details on the proof,  we may refer to papers  \cite{No1,No2} or \cite{FlSe}.
\end{proof}

 This integral representation is valid for  any abscissa $a$ which belongs to the interval  $]c, 0[$.
 We now  shift  the vertical line $\Re s = a$ {to the left}, and 
 thus  use  {\em tameness conditions} on $\psi$ at $
 s = c$, as defined in Section \ref{def}. 

\subsection{Tameness  of $\psi$ and shifting to the left.  }

\medskip 

\begin{proposition}  Consider   a sequence $f: n \mapsto f(n)$  with ${\tt val} (f) = 0$ and ${\tt deg}(f)= c<-1$. If  the lifting $\psi$ of  $\Pi[f] $ is  {tame} at $s = c$ with a region ${\cal R}$ of tameness, then  
{$$ {f(n)} =  - \left[{ \sum_{k \mid s_k \in {\cal R} } {\rm Res} \left[L_n(s) \cdot \psi(s) ;  s = s_k\right] }+  \frac 1 {2i\pi}   \int_{{\cal C}} L_n(s) \cdot \psi \, ds  \right],\, 
$$
where the sum is  taken over the poles $s_k$ of $\psi$    inside ${\cal R} $.}
\end{proposition}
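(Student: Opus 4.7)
The plan is to start from the integral representation obtained in Proposition \ref{right}, namely
$$ f(n) = -\frac{1}{2i\pi}\int_{a-i\infty}^{a+i\infty} L_n(s)\,\psi(s)\,ds, \qquad a \in\, ]c,0[\,, $$
and to push the line of integration leftward, across the pole at $s=c$ (and, in the $P$-shape case, across the periodic poles $s_k = c + 2ki\pi/t$ as well), into the tameness region $\mathcal{R}$. The geometry of the final contour $\mathcal{C}$ is dictated by the tameness shape: for an $S$- or $P$-shape, $\mathcal{C}$ is simply a vertical line $\Re s = c - \delta'$ with $0 < \delta' < \delta$; for an $H$-shape, $\mathcal{C}$ hugs the hyperbolic boundary of $\mathcal{R}$ while remaining inside the domain where $\psi$ is meromorphic and of polynomial growth.

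Next, I would fix a large truncation height $T$ and form the positively oriented closed contour $\Gamma_T$ consisting of the vertical segment $\Re s = a$, $|\Im s|\le T$, the two horizontal segments $\Im s = \pm T$, and the truncation of $\mathcal{C}$ at $|\Im s|\le T$. The Rice kernel $L_n(s) = B(n+1,-s)$ is analytic in $\Re s < n+1$ except at the nonnegative integers $0,1,\ldots,n$, all of which lie strictly to the right of $\Re s = a$; consequently the only singularities of $L_n(s)\,\psi(s)$ enclosed by $\Gamma_T$ are those poles $s_k$ of $\psi$ with $s_k \in \mathcal{R}$ and $|\Im s_k|\le T$. The Residue Theorem then gives
$$ \frac{1}{2i\pi}\int_{\Gamma_T} L_n(s)\,\psi(s)\,ds \;=\; \sum_{k\,\mid\, s_k\in\mathcal{R},\,|\Im s_k|\le T} \mathrm{Res}\bigl[L_n(s)\,\psi(s);\, s=s_k\bigr]. $$

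The main obstacle is to let $T\to\infty$ and show that the two horizontal contributions vanish. This requires combining the polynomial growth of $\psi$ inside $\mathcal{R}$ (guaranteed by tameness) with the decay of the Rice kernel in the imaginary direction. The latter follows from Stirling's formula applied to $L_n(s) = \Gamma(n+1)\Gamma(-s)/\Gamma(n+1-s)$: uniformly for $\Re s$ in a bounded interval, $|L_n(s)|$ decays like $|\Im s|^{-n-1-\Re s}$ (and is in fact exponentially small, in the spirit of Lemma \ref{ES}, through the $\Gamma(-s)$ factor), which dominates any polynomial factor produced by $\psi$. The delicate cases are the $H$-shape, in which the abscissa of the horizontal cut varies with $T$ and one must verify that the polynomial bound on $\psi$ there grows only like a power of $T$; and the $P$-shape, in which the truncation heights must be chosen as $T_m\to\infty$ strictly between consecutive periodic poles $c + 2mi\pi/t$, using precisely the horizontal polynomial-growth information supplied by the tameness hypothesis.

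Once the horizontal pieces are shown to vanish, passing to the limit and taking into account the orientation (so that $\mathcal{C}$ is traversed upward, from $-i\infty$ to $+i\infty$), the identity
$$ f(n) \;=\; -\sum_{k\,\mid\, s_k\in\mathcal{R}} \mathrm{Res}\bigl[L_n(s)\,\psi(s);\, s=s_k\bigr] \;-\; \frac{1}{2i\pi}\int_{\mathcal{C}} L_n(s)\,\psi(s)\,ds $$
follows directly, which is exactly the announced representation.
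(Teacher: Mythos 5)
Your proof follows essentially the same route as the paper's: close the contour between the vertical line $\Re s = a$ of Proposition \ref{right} and the left boundary $\mathcal{C}$ of the tameness region, apply the Residue Theorem to $L_n(s)\,\psi(s)$ (whose only enclosed poles are those of $\psi$, since the poles of $L_n$ at $0,1,\dots,n$ lie to the right), and let the truncation height tend to infinity by playing the polynomial growth of $\psi$ against the decay of the Rice kernel. One parenthetical claim is inaccurate --- $L_n(s)=\Gamma(n+1)\Gamma(-s)/\Gamma(n+1-s)$ is \emph{not} exponentially small on vertical lines, because the exponential decay of $\Gamma(-s)$ is cancelled by that of $\Gamma(n+1-s)$, leaving only the polynomial decay $O(|\Im s|^{-n-1})$ --- but that polynomial decay, which you also invoke and which beats any fixed polynomial bound on $\psi$ once $n$ is large enough, is exactly what the argument needs.
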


\begin{proof} (Sketch)
 The proof is similar to  the  previous proof. 
  With the tameness of  $\psi(s)$  at $s = c$, 
   consider the region  ${\cal R}_M$ which is the intersection of the domain ${\cal R}$ with the horizontal strip $ |\Im s| \le M$, and denote   ${\cal C}_M$ the curve  (taken  counterclockwise) which  borders the region ${\cal R}_M$. As
    $\Pi[f] (s)$ is meromorphic  in ${\cal R}_M$,  we apply the Residue Theorem to the function $L_n(s)  \,  \Pi[f] (s)$  inside ${\cal R}_M$,  and obtain $$  \frac 1 {2i \pi} \int_{{\cal C}_M} L_n(s) \cdot  \Pi[f] (s) \, ds =  \sum_{s_k \in {\cal R}_M} {\rm Res}\left[L_n(s) \cdot \Pi(f](s);  s = s_k\right]$$
where the sum is extended to all poles $s_k$ of $\Pi[f](s)$  inside  the domain  ${\cal R}_M$. 
Now, when $M\to \infty$, the integrals on the two horizontal segments tend to 0, since $\psi(s)$ is of polynomial growth, and 
\begin{align*}
\lim_{M \to \infty}  \int_{{\cal C}_M} L_n(s) \cdot \Pi[f](s)\, ds & =  \int_{d-i\infty}^{d+i\infty}  L_n(s) \cdot  \Pi[f](s) \,ds - \int_{{\cal C}} L_n(s) \cdot \Pi[f](s)  ds \\ &=  2 i \pi  \sum_{s_k \in {\cal D}} {\rm Res}\left[L_n(s) \cdot \Pi[f] (s);  s = s_k\right]\, .
\end{align*}
\end{proof}

\subsection{Sufficient conditions for tameness  of $\psi$?} \label{tame?}
 The path  is often  easy to apply,    as soon one has some knowledge of  
$$   \psi (s) : =  \frac 1 {\Gamma(-s)} {P_f^\ast(-s)} $$
 on  the left of the vertical line $\Re s = c$. 
As $\psi(s)$ is closely related to the {Mellin transform} $P_f^\ast(-s)$, 
 {meromorphy}  is  often easy to prove,  and the poles   often easy to find.    In many  natural contexts,   the polynomial growth  and the tameness of the Mellin transform $P_f^\ast (s)$  generally hold, and are often used in the Depoissonisation approach [see  Section \ref{triedepo}]. But the main difference between the Rice method and the Depoissonisation method is the division  by $\Gamma(s)$.  
 
\smallskip
Sometimes, and this  is often the case in classical  tries problems, the factor $ \Gamma(s)$  already  appears in $ P_f^\ast (s)$, and $\psi(s)$ has an explicit form, from which its polynomial growth may be easily proven.  For instance,   for   the toll $f = f_1$ associated to the path length, then  $P_f^\ast(s)= \Gamma(s+1)$ and
$ \psi(-s)$  is explicit,  and equal to $ s+1$.   This is also the case for polynomial  tolls $f$ of the form $f= T^{-m}  [f_1] $ with $m \ge 1$.  

\medskip
But  what about   other sequences, for instance    
the  ``generalized sorting'' toll $ f(k) = k \log^b k$, with an integer $b \ge 1$, or the basic sequence $f(k) = k^d \log^b k$ (with $d \in {\mathbb R} $ and an integer $b \ge 1$). The following expansion holds, that involves the $b$-th derivative of the Riemann $\zeta$ function and  generalizes  \eqref{Fast}, 
\begin{equation} \label{zeta}
P_f^\ast(-s) = 
(-1) ^b\zeta^{(b)} (s- (d-1)) + H_1(s) \, , 
\end{equation}
 where $H_1(s)$ is analytic  on $\Re s > d-1$. Then  $ P_f^\ast(-s)$ has a pole of order $b+1$ at $s = d$, and  principles of Depoissonisation apply in this case,   due to good properties of the Riemann function.   Now, in the Rice method,  the function $\psi$ satisfies $\psi(s) =  P_f^\ast(-s)/\Gamma(-s)$ and it  has a pole of order $b$ at $s = d$. 
  However, the function $1/\Gamma(-s)$, even though it is analytic on the half-plane $\Re (s) > 0$, 
  is  of exponential growth along vertical lines. The Stirling  formula   indeed  entails the estimate
 $$ \frac 1 {\Gamma(x+iy)} = \frac 1 {\sqrt{ 2 \pi}}\,  e^{\pi |y|/2} \,  |y|^{1/2-x}, \qquad \hbox{as $|y| \to \infty$}\, .$$
 It is  thus not clear whether $\psi (s)$ is tame at $s = 1$. Then, the Rice method seems to  have a more restrictive use than the Depoissonisation method. 
As we wish to compare the power of the two  paths [Depoissonisation path and Rice path], we ask the two (complementary) questions: 
\begin{quote}  
{\em 
    Is  the Rice path  only useful for very specific tolls, where the  Mellin transform $P_f^\ast (s)$ of the Poisson transform $P_f(s)$   factorizes with the factor $\Gamma (s)$, or is it  {useful}  for  more general tolls? }
  \end{quote}
 
 This leads us to study sufficient conditions under which  the analytic lifting $\psi$ may be proven to be tame. We now   propose to use the (inverse) Laplace  transform. With this tool, we prove the tameness of $\psi$  for basic sequences (see Theorem \ref{tame}) which itself leads to Theorem \ref{thmfdb}.

\section{The Rice--Laplace approach.} \label{RiceLap}
As previously,  we deal with  the pair $(P_f, p= \Pi[f])$  made with the Poisson transform and the Poisson sequence
$$ {P(z)} = e^{-z} \sum_{n \ge 0} { f(n)}\,  \frac{z^n}{n!} =  \sum_{n \ge 0}  (-1)^n {p(n)} \, \frac{z^n}{n!}\, , $$
 together with  the involution $\Pi$ between the sequences  $f$ and $p := \Pi[f]$ which will play in important role  here. 
 
 \smallskip
 This Section is devoted to the proof of  Theorem~\ref{tame}  which itself entails  Theorem~\ref{thmfdb}. We first introduce  in Section \ref{secInvLap} a new expression of the analytical  extension $\psi$ which deals with the inverse Laplace transform $\hat \varphi$ of the extension $\varphi$ of the canonical sequence attached to a sequence $F$. Then, the  sequel of the  present section focuses on basic sequences 
 $F_{d, b}$. We first obtain in Section \ref{secfdb} a precise expression of the  inverse Laplace transform $\hat \varphi$  of the 
 extension $\varphi$ of the canonical sequence of $F_{d, b}$. We then use in Section \ref{sectamefdb} the expressions  of  the two previous Sections and prove the tameness of the $\psi$ function associated to a basic  sequence.  This leads to the main theorem (Theorem~\ref{tame}) of this Section. 
 
 \subsection {A new expression for $\psi$ with  the inverse Laplace  transform.} \label{secInvLap}
  The expression of $\psi$ given in \eqref{psiNewton} is not  well-adapted to study its tameness.  Under the existence of an analytic lifting of the sequence  $f$, of polynomial growth on {\em halfplanes}, we obtain another expression of $\psi$ which is easier to deal with.

\smallskip  
  We first recall  the principles of Section \ref{VD} : with a initial sequence $F$ of degree $d$,  we associate its canonical sequence $f$, and take it as our new sequence $f$.  If  the old $F$ admits an analytic lifting   of polynomial growth  on $\Re s >0$, then  the new  sequence $f$ admits an analytic lifting  $\varphi$ on $\Re s >-1$  that satisfies $\varphi(s) =   O(|s+1|^c)$ there, with 
  $c<-1$.    We now deal with this new sequence $f$, and then return (later) to the initial sequence $F$.   
 
 \begin  
{proposition}   \label{LapInv} Consider    a sequence $f$ which    admits an {analytic} lifting  $\varphi$    on $\Re s>-1$,     with the estimate  $ \varphi(s)  = O(|s+1|^c)$ with $ c<-1$.    Then:

    \begin{itemize} 

\item[$(i)$]    The function $\varphi$ admits an inverse Laplace transform 
$\hat  \varphi$ whose restriction to the real line $[0, + \infty[$  is written as  the Bromwich integral for $b \in ]-1, 0[$, 
$${ \hat \varphi(u) = \frac 1 {2i\pi} \int_{\Re s = b} \varphi(s) \, e^{su} ds} \, ,\qquad   \hbox{  and   satisfies  $ |\hat \varphi(u)| \le K  e^{bu}$}\, .$$

\item[$(ii)$]
There exists an  analytical  lifting $\psi$ of the sequence $\Pi[f]$  on   $\Re s >-1$,  that  is expressed  as an integral on the real line, 
\begin{equation} \label{psilapinv}
 \psi(s) = \int_0^{+ \infty}  \hat \varphi(u)  \cdot ( 1 -e^{-u})^{s} du \, .
\end{equation}
\end{itemize}
\end{proposition}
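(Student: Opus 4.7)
The proof rests on a Bromwich/Laplace inversion argument which becomes especially clean under the hypothesis $\varphi(s) = O(|s+1|^c)$ with $c<-1$.

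I would begin with (i) by directly \emph{defining} $\hat\varphi$ on the real line through the stated Bromwich integral with any fixed $b \in {]}{-1},0[$. Absolute convergence is immediate: on the vertical line $\Re s = b$, one has $|e^{su}| = e^{bu}$, while $|\varphi(b+it)|$ is integrable in $t$ thanks to the decay $O(|1+b+it|^{c})$ with $c<-1$. Taking moduli under the integral sign yields the announced bound $|\hat\varphi(u)| \leq K\,e^{bu}$ with $K := \frac{1}{2\pi}\int_{-\infty}^{+\infty}|\varphi(b+it)|\,dt$. Independence of the chosen $b$ is checked by considering two vertical lines $\Re s = b_1$ and $\Re s = b_2$ inside ${]}{-1},0[$, closing the rectangle through long horizontal segments, and observing that the horizontal contributions vanish as $|\Im s|\to\infty$ because $\varphi$ has polynomial decay on the whole halfplane $\Re s > -1$ (where, by hypothesis, it is analytic with no poles).

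Next, to tie $\hat\varphi$ back to $\varphi$, I would prove that $\varphi$ is the Laplace transform of $\hat\varphi$ on $\Re s > -1$. For such $s$, pick $b \in ]-1,\Re s[\,\cap\,]-1,0[$; Fubini applies to
\[
\int_0^{+\infty}\!\! \hat\varphi(u)\,e^{-su}\,du = \frac{1}{2i\pi}\int_{\Re w = b}\!\!\varphi(w)\!\int_0^{+\infty}\!\! e^{(w-s)u}\,du\,dw = \frac{1}{2i\pi}\int_{\Re w = b}\frac{\varphi(w)}{s-w}\,dw,
\]
and closing this contour to the right by a large semicircle (whose contribution vanishes because $|\varphi(w)/(s-w)| = O(|w|^{c-1})$ with $c<-1$) encloses the simple pole at $w = s$ and returns $\varphi(s)$. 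In particular $\varphi(k) = f(k)$ is recovered as the Laplace transform of $\hat\varphi$ at $s = k$ for every integer $k \ge 0$.

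For (ii), I would take the announced formula as the definition of $\psi$ with $(1-e^{-u})^s := \exp(s\log(1-e^{-u}))$ using the real logarithm, so that $|(1-e^{-u})^s| = (1-e^{-u})^{\Re s}$. Convergence on $\Re s > -1$ is then easy: near $u = 0$, $(1-e^{-u})^{\Re s} \sim u^{\Re s}$ is integrable since $\Re s > -1$ and $\hat\varphi$ is bounded; as $u\to +\infty$, this factor tends to $1$ while $|\hat\varphi(u)| \le K\,e^{bu}$ with $b<0$ enforces exponential decay. Analyticity of $\psi$ on $\Re s > -1$ follows by differentiation under the integral sign, the required bounds being locally uniform in $s$. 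Finally, evaluating at integer $n \ge 0$, the binomial identity $(1-e^{-u})^n = \sum_{k=0}^n \binom{n}{k}(-1)^k e^{-ku}$ allows term-by-term integration and, combined with the Laplace identity just established, yields $\psi(n) = \sum_{k=0}^n (-1)^k\binom{n}{k}\varphi(k) = \sum_{k=0}^n (-1)^k\binom{n}{k}f(k) = \Pi[f](n)$.

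\textbf{Anticipated main obstacle.} No single step is hard in isolation; the delicate point is the simultaneous juggling of three convergence thresholds — the Bromwich abscissa $b \in ]-1,0[$, the halfplane $\Re s > b$ where Fubini is legitimate, and the final halfplane $\Re s > -1$ where $\psi$ must be analytic. In particular, extending the analyticity of $\psi$ up to the edge $\Re s > -1$ requires letting $b$ approach $-1$, and this is exactly the point where the hypothesis $c < -1$ (supplied upstream by the passage to the canonical sequence in Section~\ref{VD}) is indispensable.
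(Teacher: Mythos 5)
Your proof is correct, but it follows a genuinely different route from the paper's. For part (ii), the paper applies the Rice integral representation (Proposition~\ref{right}) to the swapped pair $(p,f)$ via the involution $\Pi$, obtaining $p(n)=\frac{1}{2i\pi}\int_{\Re s=b}\varphi(s)L_n(s)\,ds$; it then recognizes the kernel $L_n(s)$ as the Beta function $B(n+1,-s)$, replaces $n$ by a complex $t$ using the integral representation $B(t+1,-s)=\int_0^\infty e^{su}(1-e^{-u})^t\,du$ (valid for $\Re t>-1$, $\Re s<0$), and swaps the two integrals to make $\hat\varphi$ appear. You instead bypass the Rice machinery entirely: you first establish the Laplace-inversion identity $\varphi(s)=\int_0^\infty\hat\varphi(u)e^{-su}\,du$ on $\Re s>-1$ by Fubini plus a residue computation (closing the contour to the right around the pole at $w=s$), then define $\psi$ by the announced formula, check convergence and analyticity directly, and verify $\psi(n)=\Pi[f](n)$ through the finite binomial expansion of $(1-e^{-u})^n$. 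Your version is more elementary and self-contained, and it has the merit of actually justifying the name ``inverse Laplace transform'' in part (i) --- the paper's proof of (i) asserts the inversion relation without verifying it, whereas you prove $\varphi=\widetilde{\hat\varphi}$ explicitly. What the paper's route buys in exchange is the structural link with the Rice kernel $L_n(s)=B(n+1,-s)$, which ties this proposition to the rest of the Rice--Mellin framework, and it yields the integral formula for all complex $t$ with $\Re t>-1$ in one stroke rather than by interpolation at the integers followed by an analyticity argument. Both proofs ultimately rest on the same Fubini exchange legitimated by the hypothesis $c<-1$, so the analytic content is equivalent.
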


 \begin{proof} 
 
 $(i)$ In a general context, where the   analytic lifting $\varphi(s)$ is  only defined  on $\Re s >0$, the Bromwich integral  is written as 
 $${ \hat \varphi(u) = \frac 1 {2i\pi} \int_{\Re s = b} \varphi(s) \, e^{su} ds}, \qquad \hbox{ (with $b>0$)}\, .$$
Here, the hypotheses on $\varphi$ are stronger: we can shift the integral on the left and choose $b \in ]-1, 0[$. 
Moreover, the  Bromwich integral is normally convergent, and the exponential   bound  on $\hat \varphi(u)$ holds.

\smallskip
$(ii)$ We  use the involutive character of $\Pi$ and apply Proposition  \ref{right} to the pair $(p := \Pi[f] , f= \Pi^2 [f])$. In the classical Rice-Mellin path, it is applied to  the pair $(f, \Pi[f])$, when  $\Pi[f]$ is of polynomial growth, and it transfers the binomial  expression of $f$  in terms  of $\Pi[f] = f$ into an integral expression. Here,  due to the polynomial growth of $f= \Pi^2[f]$ on $\Re s >-1$,  it transfers the binomial  expression of $\Pi[f]$  in terms  of $\Pi^2[f] = f$ into a Rice integral,  with $b \in ]-1, 0[$, 
$$ {p(n) = \frac 1 {2i \pi}\int_{\Re s = b} \varphi(s) L_n(s) ds}, \qquad L_n(s) =  \frac {\Gamma(n+1) \Gamma(-s)}{\Gamma(n+1-s)}\, .$$
We now deal with  the Beta function  
$$ B(t+1, -s) =  \frac {\Gamma(t+1) \Gamma(-s)}{\Gamma(t+1-s)} \, ,$$
that is well defined for  $\Re t >-1$ and $\Re s<0$,  and admits an integral expression 
$$ B(t+1, -s) =\int_0^\infty  e^{su} (1-e^{-u} )^t du, \qquad  \hbox {(for  $\Re t >-1,  \, \Re s<0$)} \, .$$
 Together with the equality $L_n(s)  = B(n+1, -s)$,  
 this entails an analytic extension   $\psi$ of the sequence $\Pi[f]$  on the halfplane $\Re t >-1 $, 
$$\psi(t)  = \frac 1 {2i \pi}\int_{\Re s = b} \varphi(s) B(t+1, -s)  ds, \quad (b<0) $$
with an integral expression, 
$$ \psi(t)  = \frac 1 {2i \pi}\int_{\Re s = b} \varphi(s) \left[ \int_0^\infty  e^{su} (1-e^{-u} )^t du\right]  ds \, .$$
With properties of $\varphi$,  it is possible to exchange the integrals: then,  the equality holds
$$ \psi(t) = 
\int_0^\infty  (1-e^{-u})^t  \left[  \frac 1 {2i \pi}\int_{\Re s = b} \varphi(s)\,  e^{su} ds \right] du, $$
and the second integral   is  the inverse Laplace transform $\hat \varphi$ of $\varphi$. This ends the proof.
\end{proof}

\medskip
  With any  sequence   $F$ of polynomial growth, with an analytical lifting, we associate   its canonical  sequence $f$ 
  which fulfills hypotheses of Proposition  \ref{LapInv}, with an analytical lifting $\varphi$.    Then,  Proposition \ref{LapInv}  provides 
  an  integral form    for an  analytical lifting $\psi$ of the $\Pi[f]$ sequence, in tems of   an integral   over the real line $[0, +\infty[$ (described in \eqref{psilapinv}) which involves the (inverse) Laplace  transform $\hat \varphi$ of $\varphi$.  
 
   \medskip
  Then, there are  two steps which deal with  a sequence $f$, and aim at studying   the tameness of the analytical extension $\psi$ of $\Pi[f]$: 
  
\smallskip
$(i)$  we  transfer properties of   $\varphi$ into  properties of  its  inverse Laplace  transform $\hat \varphi$; 

\smallskip
$(ii)$   we  use properties of $\hat \varphi$ and   
 study  the tameness of $\psi$, via  the  integral representation  \eqref{psilapinv}.

\smallskip 
 We now perform these two steps.   The  (inverse) Laplace  transform is not well studied, and we do not know   a general  result which describes how properties of   a general  function $\varphi$  are transfered into properties of its transform $\hat \varphi$.  This is why we do not perform  this study  for a general sequence $f$ and restrict ourselves to  the case of basic sequences.
 
 \subsection{Canonical sequences associated to basic sequences and their inverse Laplace transforms.}   \label{secfdb} We then deal with  (initial) basic sequences defined as  follows: 

\begin{definition}
 Consider a pair  $(d, b)$   with  a real $d$ and an {\em integer} $b \ge 0$.   
 A sequence $F$ is called  basic with pair  $(d, b)$   if it  satisfies
 \begin{equation} \label{basic}
F(k) = k^d\,  \log^b k  \quad \hbox{ for any $k \ge 2$} \, . 
\end{equation} 
\end{definition} 

 For applying  Proposition \ref{LapInv}, we have to deal with canonical sequences associated to basic sequences.   
  Letting   $\ell:= \sigma (d)$ with $\sigma$ defined in \eqref{ell}, the   canonical sequence  $f$ associated with $F$  can be extended to a function $f$ defined on   $]-1, +\infty[$ as 
$$f(x) = \log^b  (x+ \ell)   \frac { (x+\ell)^d}{(x+1) (x+2)  \ldots (x+ \ell)}  =   \log^b  (x+ \ell) \, (x+\ell)^{d-\ell} \, U\left( \frac 1 {x+\ell}\right)\, , 
$$
  and involves  a function $U$ defined as   $U(u) = 1$ for $ d <0$ and, for $d \ge 0$ as  
\begin{equation} \label {Ud}U(u) = (1-u)^{-1} (1-2u)^{-1} \ldots (1- (\ell -1) u) ^{-1} \quad   \hbox{(with $ \ell = 2 + \lfloor d\rfloor$)} \, .
\end{equation}   Then, for $d \ge 0$, the coefficient $a_j := [u^i]   U(u)$ satisfies 
$ a_j = \Theta  (\ell-1)^j $. 
Finally,  we have proven:

 \begin{lemma} 
 Consider  the basic sequence $F_{d, b}$  with pair $(d, b)$ and let $\ell:= \sigma(d)$ with $\sigma$ defined in \eqref{ell}. The   canonical  sequence  $f$ associated to  $F_{d, b}$  is extended in a function $\varphi$ defined on $\Re s >-1$ as 
 $$ \varphi(s)  =  (s+\ell)^{d-\ell}   \log ^b (s+\ell) \,    U\left( \frac 1 {s+\ell}\right)   $$
 where $ U$  is defined in \eqref{Ud}. For $d \ge 0$, the coefficient $a_j := [u^i]   U(u)$ satisfies 
$ a_j = \Theta_d  (\ell-1)^j $. \end{lemma}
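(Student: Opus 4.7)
The plan is to compute the canonical sequence attached to $F_{d,b}$ explicitly from its definition, and then to perform elementary algebraic manipulations to recognize the claimed shape. Recall from Section \ref{VD} that the canonical sequence is $f = T^{\ell}[F^+]$, where $F^+$ agrees with $F_{d,b}$ outside its first $\ell = \sigma(d)$ terms. By the definition of the shift $T^\ell$ given in Lemma \ref{shift}, this yields, for every $n \geq 0$,
\begin{equation*}
f(n) = \frac{F^+(n+\ell)}{(n+1)(n+2)\cdots(n+\ell)} = \frac{(n+\ell)^d \log^b(n+\ell)}{(n+1)(n+2)\cdots(n+\ell)} \, ,
\end{equation*}
since $n+\ell \geq \ell \geq 2$. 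Replacing $n$ by a complex variable $s$ with $\Re s > -1$ provides the analytic lifting $\varphi(s)$, which inherits polynomial decay since $d - \ell < -1$.

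The next step is to force $(s+\ell)$ to appear as the single ``reference'' quantity in the denominator. The key identity is $s+k = (s+\ell) - (\ell - k)$ for $1 \leq k \leq \ell$, so that
\begin{equation*}
\prod_{k=1}^{\ell}(s+k) = \prod_{j=0}^{\ell-1}\bigl((s+\ell) - j\bigr) = (s+\ell)^\ell \prod_{j=1}^{\ell-1}\left(1 - \frac{j}{s+\ell}\right) \, .
\end{equation*}
Substituting this factorization in the expression for $\varphi(s)$ and setting $u = 1/(s+\ell)$ immediately yields
$\varphi(s) = (s+\ell)^{d-\ell}\, \log^b(s+\ell) \, U(u)$, with $U(u) = \prod_{j=1}^{\ell-1}(1-ju)^{-1}$, exactly the function defined in \eqref{Ud}.

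It remains to estimate $a_j := [u^j]U(u)$ when $d \geq 0$ (so that $\ell \geq 2$ and the product is non-empty). The rational function $U$ has simple poles at $u = 1/k$ for $k = 1, \ldots, \ell-1$, and the partial fraction decomposition reads $U(u) = \sum_{k=1}^{\ell-1} c_k /(1-ku)$, which gives $a_j = \sum_{k=1}^{\ell-1} c_k\, k^j$. The dominant pole is at $u = 1/(\ell-1)$, and the standard residue computation delivers
\begin{equation*}
c_{\ell-1} = \prod_{k=1}^{\ell-2} \frac{1}{1 - k/(\ell-1)} = \frac{(\ell-1)^{\ell-2}}{(\ell-2)!} > 0 \, ,
\end{equation*}
a strictly positive constant depending only on $d$. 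All other terms contribute at most $O((\ell-2)^j)$, so $a_j = c_{\ell-1}(\ell-1)^j \bigl(1 + O(((\ell-2)/(\ell-1))^j)\bigr) = \Theta_d((\ell-1)^j)$, which concludes the argument.

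There is no real obstacle in this proof: once the definition of the canonical sequence is unfolded, the result is a short algebraic rearrangement followed by a textbook residue computation for a rational generating function. The only mild point of care is to handle the degenerate case $d = 0$ (where $\ell = 2$ and $U(u) = (1-u)^{-1}$, so that $a_j = 1 = (\ell-1)^j$), which is consistent with the general formula.
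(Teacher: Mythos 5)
Your proof is correct and follows essentially the same route as the paper: unfold $f = T^{\ell}[F^{+}]$, rewrite the falling product $(s+1)\cdots(s+\ell)$ as $(s+\ell)^{\ell}\prod_{j=1}^{\ell-1}\bigl(1-j/(s+\ell)\bigr)$, and read off $U$. The only difference is that you actually justify the estimate $a_j=\Theta_d\bigl((\ell-1)^j\bigr)$ via the partial-fraction/dominant-pole computation (and check positivity through $c_{\ell-1}>0$), whereas the paper simply asserts it; this is a welcome addition rather than a divergence.
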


  The following lemma describes the  inverse Laplace transform $\hat \varphi$. 
 
\begin{proposition} \label{cano2} Consider a basic sequence $F$ with pair $(d, b)$. Then
the inverse Laplace transform $ \hat \varphi (u) $   is a linear   combination of functions,    for $m \in [0..b]$, 
\begin{equation} \label{hatvarphidb}
 e^{-\ell u} \, u^{-c-1} \, \log^m u    \, \left [1 +  u V^{\langle m \rangle}(u)\right], \qquad \hbox {with} \quad  |V^{\langle m \rangle} (u) |\le A_{(d, b)}\,   u\,  e^{(\ell-1) u}\, .
 \end{equation} 
\end{proposition}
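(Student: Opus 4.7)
The plan is to reduce the computation to two classical operations: a translation (which produces the $e^{-\ell u}$ factor) and the inverse Laplace transforms of the elementary building blocks $t^{-\alpha}\log^{m} t$, reached via a power series expansion. Writing $\varphi(s)=\tilde{\varphi}(s+\ell)$ with $\tilde{\varphi}(t)=t^{d-\ell}\log^{b}(t)\,U(1/t)$, the translation rule for Laplace transforms yields $\hat{\varphi}(u)=e^{-\ell u}\,\widehat{\tilde{\varphi}}(u)$, so I can concentrate on $\widehat{\tilde{\varphi}}$.

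Next, I would exploit that $U$ is analytic at $0$, with $U(u)=\sum_{j\ge 0}a_j u^{j}$ and $a_j=\Theta_d((\ell-1)^{j})$ by the preceding lemma. Setting $\alpha:=\ell-d=-c>1$, this gives the termwise expansion
\[
\tilde{\varphi}(t)=\sum_{j\ge 0} a_{j}\,t^{-\alpha-j}\log^{b}t,
\]
valid for $t>\ell-1$. For each building block I would use the standard identity obtained by differentiating $\mathcal{L}[u^{\beta-1}/\Gamma(\beta)](t)=t^{-\beta}$ in the parameter $\beta$: the inverse Laplace transform of $t^{-\alpha-j}\log^{b}t$ is $u^{\alpha+j-1}$ times a polynomial in $\log u$ of degree $b$, whose coefficients are given explicitly by derivatives of $1/\Gamma$ at $\alpha+j$.

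I would then separate the $j=0$ contribution, which is a linear combination $\sum_{m=0}^{b}c_{m}\,u^{-c-1}\log^{m}u$ (this yields the main term $u^{-c-1}\log^{m}u$ announced in the statement), from the $j\ge 1$ contributions, which each carry an extra factor $u^{j}$ and can be grouped into $u\,V^{\langle m\rangle}(u)$ after collecting according to the power of $\log u$. The required bound $|V^{\langle m\rangle}(u)|\le A_{(d,b)}\,u\,e^{(\ell-1)u}$ then follows from $a_j=\Theta((\ell-1)^{j})$ together with the lower bound $|\Gamma(\alpha+j)|^{-1}\le C/j!$ for large $j$: these combine to give an entire series in $u$ whose coefficients decay like $(\ell-1)^{j}/j!$, hence an exponential of rate $\ell-1$, and the remaining factor $u$ comes from relabelling the index from $j\ge 1$ to $j-1\ge 0$.

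The main technical obstacle I anticipate is the justification of the termwise inversion, equivalently the exchange of summation and Bromwich integral: the series representation of $U(1/t)$ only converges for $|t|>\ell-1$, whereas the Bromwich contour sits on some line $\Re t=b'>0$. I would overcome this by first shifting the contour to $\Re t=b'$ with $b'>\ell-1$ (permitted since $\tilde{\varphi}$ is analytic and of polynomial decay on $\Re t>0$), applying Fubini on the truncated sum $\sum_{j\le J}$, and controlling the tail $\sum_{j>J}$ uniformly on the contour thanks to the geometric decay of $a_j\,t^{-j}$ there. Once this interchange is secured, everything else is bookkeeping of the $\Gamma$-derivatives to identify the coefficients $c_{m}$ and of the exponential bound on $V^{\langle m\rangle}$.
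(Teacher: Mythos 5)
Your proposal follows essentially the same route as the paper's proof: a translation to produce the factor $e^{-\ell u}$, the expansion of $U$ into $\sum_{j} a_j (s+\ell)^{-j}$, inversion of each power via $\mathcal{L}^{-1}[t^{-\beta}]=u^{\beta-1}/\Gamma(\beta)$ with the logarithms generated by differentiating in the exponent parameter, and the tail bound obtained from $a_j=\Theta\bigl((\ell-1)^j\bigr)$ combined with $\Gamma(-c)/\Gamma(j-c)\le 1/j!$. The only differences are organizational — the paper sums the series first (its Step 2) and then applies $\partial_t^b\,\cdot\,|_{t=0}$ to the closed form, whereas you differentiate each monomial before summing — and your explicit Fubini justification of the termwise inversion (note that after the substitution $t=s+\ell$ the Bromwich contour already lies at $\Re t\in\,]\ell-1,\ell[$, so no further shift is needed) makes explicit a step the paper leaves implicit.
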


\begin{proof}
There are three main steps, according to the type of the basic sequence. 
 
\medskip
{\em Step 1.}  We begin with   the  particular case when $\varphi(s)$ is  of the form  $\varphi(s) = 
 (s+\ell)^{c}$ (with $c <-1$). Its inverse Laplace transform $\hat \varphi$ is   then 
 $$ \hat  \varphi (u) =  \frac 1 {\Gamma (-c)} e^{-\ell u} \,  {u^{-c-1}} \, .   $$
 
  \medskip
 {\em Step 2.} 
 We  now consider a function   (without logarithmic factor) of the form  
 \begin{equation}  \label{varphic}
   \varphi(s) = \varphi_c(s) =  (s+ \ell)^{c}  U \left( \frac 1 {s+\ell} \right) = \sum_{j \ge 0} a_j \, (s+\ell)^{c-j}\, .
   \end{equation}
  Then $\varphi$ is a linear combination of functions  of  Step 1 and  the  inverse Laplace transform $\hat \varphi$ of $\varphi$  
  is  written as
\begin{equation} \label {hatvarphic}
    \hat \varphi_c(u) = e^{-\ell u}  \ \frac { u^{-c-1} }{\Gamma(-c)}[ 1+ V_c(u) ] , \quad \hbox{with}\ \   V_c(u) := \sum_{j \ge 1}  a_{j} {u^{j}} G_j(c)\, ,
    \end{equation} 
     where  the function $G_j$  is the rational fraction which associates with $c$ the ratio 
    \begin{equation} \label{Gj}
    G_j(c) :=  \frac {\Gamma(-c)}{\Gamma (j-c)} = \frac 1 {-c(1-c)\ldots (j-1-c)} \, .
    \end{equation} 
    As $c<-1$,  the inequality $G_j(c) \le (1/j!) $ holds  and this entails  the inequality $|V_c(u)| \le A \,   u\, e^{(\ell-1)u}$, where the constant $A$ only depends on $d$. 
  
 \medskip
 {\em Step 3.}    We add finally a  logarithmic factor and consider  a function  of the form  
  \begin{equation} \label {g3}
   \varphi(s)= (s+\ell)^{c} \,   \log^b (s+ \ell)\, U \left( \frac 1 {s+\ell} \right)
   \end{equation}
     which is   written as a  $b$-th derivative. Indeed,   
     the equality holds
   $$  U \left( \frac 1 {s+\ell} \right)  (s+\ell)^{c} \log^b (s+\ell)  =   \frac { \partial^b}{ \partial t^b}  \left[(s+\ell)^{c +t}  U \left( \frac 1 {s+\ell} \right) \right] \bigg|_{t = 0} \, , $$
and we can take the derivative ``under the Laplace integral'':   we  then deduce that the  inverse Laplace transform  $\hat \varphi$   of the function $\varphi$ defined in \eqref{g3}  is equal to 
   $$    \frac { \partial^b}{ \partial t^b}  \hat \varphi_{c+t} (u)\bigg|_{t = 0} =  e^{-\ell u}  \frac { \partial^b}{ \partial c^b}  \left[ \frac { u^{-c-1} }{\Gamma(-c)}( 1+ V_c(u) ) \right]  \, .$$ 
 The coefficient  of $u^j$ in the  $k$-th derivative of $c \mapsto V_c(u)$  involves  the  $k$-th derivative of the function $ c \mapsto G_j(c)$, defined in \eqref{Gj} which satisfies  the inequality 
 
 \centerline { $|G^{(k)}_j(c)| \le  A_k \log^k (j+c) \,  G_j(c)$ \ \ for some constant $A_k$.} 
 \vskip 0.1 cm Then, the inequality holds, 
   $$  \left| \frac { \partial^k}{ \partial c^k}  V_c(u) \right|\le   A_{(d, b)} \,   u \, e^{(\ell-1)u}  \, , $$
    and involves  a constant $A_{(d, b)}$ which depends on the pair $(d, b)$.   On the other  hand, the  following $m$-th derivative  is a linear combination of the form 
   $$  \frac { \partial^m}{ \partial c^m} \left[  \frac{u^{-c-1}}{\Gamma(-c)} \right]  =  u^{-c-1}  \left[ (-1) ^m  \sum_{a = 0}^m  {m  \choose a}\, 
     (\log^{a} u) \,   H^{(m-a)}(c)\right]  \, ,   $$
     where $H$ is the function  defined as $H(c) = 1/\Gamma(-c)$.  This ends the proof. 
  \end{proof}

 \subsection {Tameness of $\psi$   in the case of canonical versions of basic sequences. } \label{sectamefdb}

The expression \eqref{psilapinv} leads us  to   the operator ${\cal I}_s$ defined as 
  \begin{equation} \label{Is}
    {\cal I}_s [h] := \int_0^\infty   h(u) (1-e^{-u})^s  du  
  =  \int_0^\infty   h(u)  \, u^s\,  \left( \frac {1-e^{-u}}{u}\right)^s  du \, , 
  \end{equation}
 and the  relation $\psi(s) = {\cal I}_s [ \hat \varphi]$ holds.  Using the expressions of $\hat \varphi$ obtained in Proposition \ref{cano2} together with the bound on the remainder term  $ |V^{\langle m \rangle} $ entail  the decomposition:   
 \begin{lemma} 
   Consider a basic sequence $F$ with pair $(d, b)$. Then
the analytic extension $\psi$   of the sequence $\Pi[f]$ associated with the canonical extension $f$ of $F$    is a linear   combination of functions,    for $m \in [0..b]$, each of them being the sum of a main term $A^{\langle m \rangle}(s)$ and a remainder term $O(B^{\langle m \rangle}(s))$, with
\begin{equation} \label{psidb}
A^{\langle m \rangle}(s) =   {\cal I}_s \left[ e^{-\ell u} \, u^{-c-1} \, \log^m u \right], \qquad  B^{\langle m \rangle}(s)  = 
{\cal I}_s \left[ e^{- u} \, u^{-c} \, |\log^m u | \right]
 \end{equation}
 \end{lemma}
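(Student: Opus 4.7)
The plan is to combine the two previous propositions of this section directly. From Proposition~\ref{LapInv}$(ii)$, the analytic lifting $\psi$ of $\Pi[f]$ admits the integral representation $\psi(s)={\cal I}_s[\hat\varphi]$ with ${\cal I}_s$ the operator defined in~\eqref{Is}. From Proposition~\ref{cano2} applied to the basic sequence $F_{d,b}$, the inverse Laplace transform $\hat\varphi$ of the analytic lifting of the canonical sequence is a finite linear combination, indexed by $m\in[0..b]$, of building blocks of the form $e^{-\ell u}\,u^{-c-1}\,\log^m u\,[1+uV^{\langle m\rangle}(u)]$, with $V^{\langle m\rangle}$ controlled as in~\eqref{hatvarphidb}.

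The first step is to apply the linear operator ${\cal I}_s$ term by term to this decomposition. The ``$1$'' piece of each bracket contributes exactly the announced main term $A^{\langle m\rangle}(s)={\cal I}_s[e^{-\ell u}u^{-c-1}\log^m u]$. The remainder piece $e^{-\ell u}u^{-c-1}\log^m u\cdot uV^{\langle m\rangle}(u)$ is bounded in modulus, using the pointwise estimate on $V^{\langle m\rangle}$ from~\eqref{hatvarphidb}, by a constant depending only on $(d,b)$ times $e^{-u}u^{-c}|\log^m u|$; the crucial cancellation is that the $e^{(\ell-1)u}$ growth coming from $V^{\langle m\rangle}$ is absorbed by the $e^{-\ell u}$ prefactor, leaving only $e^{-u}$. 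Pushing this majoration inside ${\cal I}_s$ and using $|(1-e^{-u})^s|=(1-e^{-u})^{\Re s}$ yields the announced $O(B^{\langle m\rangle}(s))$ bound.

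The only analytic subtlety, and what I would treat most carefully, is the absolute convergence of each integral and the validity of pushing the modulus inside. Near $u=0$ the kernel behaves as $(1-e^{-u})^s\sim u^s$, so the main integrand behaves like $u^{s-c-1}|\log^m u|$, which is integrable on the halfplane $\Re s>c$; the remainder integrand behaves even better near~$0$. At $u=+\infty$, $(1-e^{-u})^s\to 1$ and the exponential decay ($e^{-\ell u}$ for the main term, $e^{-u}$ for the remainder) dominates. There is no serious obstacle: the content of the lemma is really that the estimates of Proposition~\ref{cano2} were calibrated precisely so that the Laplace--integral representation of $\psi$ provided by Proposition~\ref{LapInv} splits cleanly into the closed-form piece $A^{\langle m\rangle}$ plus an explicit $O(B^{\langle m\rangle})$ error.
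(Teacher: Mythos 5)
Your proposal is correct and takes essentially the same route as the paper, which derives the lemma in a single sentence by composing the integral representation $\psi(s)={\cal I}_s[\hat\varphi]$ of Proposition~\ref{LapInv} with the decomposition of $\hat\varphi$ in Proposition~\ref{cano2} and the bound on $V^{\langle m\rangle}$. Your explicit convergence checks near $u=0$ and $u=\infty$, and the observation that $e^{(\ell-1)u}$ is absorbed by $e^{-\ell u}$, simply make precise the details the paper leaves implicit.
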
 

We then introduce the functions  
\begin{equation} \label{Fs}
 N_s (u) := \left( \frac {1- e^{-u}}{u}\right)^s, \    M_s(u) :=  \left[ \left( \frac {1- e^{-u}}{u}\right)^s  -1 \right]  \, , 
   \end{equation}
defined on $[0, + \infty]$,   that satisfies the two estimates, with $\sigma:= \Re s$
  \begin{equation}  \label {estNs}
 N_s(u) = \Theta(1),\quad   (u \to 0) ,  \qquad    \left| N_s(u)  \right|  = O(u^{-\sigma}) \quad (u \to \infty)  
 \end{equation} 
 \begin{equation}   \label {estMs}  M_s (u)   = \Theta(u) \quad   (u \to 0) , \qquad    \left| M_s(u)  \right|  = O(u^{-\sigma}) \quad (u \to \infty, \sigma>0 )  \, . 
    \end{equation}
  With the estimates  \eqref{estMs}, and for ``good'' functions $h$,   the integral ${\cal I}_s[h]$ may be compared to the Mellin transform  $h^\star (s+1)$.

    We now apply this idea  to the  particular cases where  $ h= \hat \varphi (u)$ arises in connection with basic sequences $F$.   The main term  $A^{\langle m \rangle}(s)$   is  then compared to  the twisted version of the  $\Gamma$ function and its $m$-th derivative, that are  defined for $\Re s >0$,   integers  $m \ge 0$, and  $\ell \ge 1$ as 
\begin{equation} \label{twGamma}
 \Gamma_\ell ^{(m)} (s) :=
 \int_0^\infty e^{-\ell u} u^{s-1} \log^m u \,   du \, .
 \end{equation}
We  indeed prove the following:  

\begin{lemma}
For $\Re s \ge 0$,  the two functions $A^{\langle m \rangle}(s)$ and $B^{\langle m \rangle}(s)$  are bounded on  the halfplane $\Re s \ge 0$. 
For any integer $m\ge 0$ and any  integer $\ell \ge1$, the  two functions
$$ 
 A^{\langle m \rangle}(s) -  \Gamma_\ell^{(m)} (s-c), \quad  B^{\langle m \rangle}(s)$$
 are analytic and of bounded growth on the vertical strip  $\Re s >c -\sigma_0$, with $\sigma_0\in ]0, 1[$.
 \end{lemma}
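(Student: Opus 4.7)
My plan is to treat the two functions $A^{\langle m \rangle}$ and $B^{\langle m \rangle}$ separately, in each case exploiting the elementary identity $|(1-e^{-u})^s| = (1-e^{-u})^{\Re s}$, which is bounded by $1$ whenever $\Re s \ge 0$. For the first claim, this yields immediately
$$|A^{\langle m \rangle}(s)| \le \int_0^{\infty} e^{-\ell u} u^{-c-1} |\log^m u|\, du, \qquad |B^{\langle m \rangle}(s)| \le \int_0^{\infty} e^{-u} u^{-c} |\log^m u|\, du,$$
and both majorants are finite because $c < -1$. Since the bounds do not depend on $s$, both functions are uniformly bounded on the closed halfplane $\Re s \ge 0$.

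For the analytic continuation of $A^{\langle m \rangle}(s) - \Gamma_\ell^{(m)}(s-c)$ to the strip $\Re s > c - \sigma_0$, I would first use $(1-e^{-u})^s = u^s N_s(u)$ together with $M_s(u) = N_s(u) - 1$ to rewrite
$$A^{\langle m \rangle}(s) - \Gamma_\ell^{(m)}(s-c) = \int_0^{\infty} e^{-\ell u} u^{s-c-1} \log^m u \cdot M_s(u)\, du.$$
The key point is the expansion $(1-e^{-u})/u = 1 - u/2 + O(u^2)$ near $0$, which implies $M_s(u) = -su/2 + O((1+|s|^2)u^2)$ as long as $|s|u$ stays bounded. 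The resulting $O(u)$ cancellation shifts the critical exponent at the origin from $u^{s-c-1}$ to $u^{s-c}$, which is integrable as soon as $\Re(s-c) > -1$, a region strictly larger than $\Re s > c-\sigma_0$. Analyticity on this region follows from standard differentiation under the integral, and at infinity the factor $e^{-\ell u}$ with $\ell \ge 1$ dominates any polynomial factor. To secure polynomial growth in $|\Im s|$, I would split the integral at $u_0 = 1/(1+|s|)$: on $[0,u_0]$ apply the bound $|M_s(u)| \le C(1+|s|)u$ coming from $|e^{sg(u)} - 1| \le |sg(u)|\, e^{|sg(u)|}$ with $g(u) = \log((1-e^{-u})/u)$; on $[u_0, \infty)$ use the trivial bound $|M_s(u)| \le 1 + ((1-e^{-u})/u)^{\Re s}$, which is absorbed by $e^{-\ell u}$ at the tail.

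The function $B^{\langle m \rangle}$ is markedly easier. Its defining integrand behaves near $0$ like $u^{s-c}|\log^m u|$, integrable on $(0,1]$ as soon as $\Re(s-c) > -1$, so $B^{\langle m \rangle}$ is analytic on $\Re s > c-1$, which contains the required strip. The identity $|(1-e^{-u})^s| = (1-e^{-u})^{\Re s}$ shows furthermore that $|B^{\langle m \rangle}(s)|$ depends only on $\Re s$, whence uniform boundedness on every vertical sub-strip. The only genuine difficulty in the whole argument is the $A$-part: controlling $M_s(u)$ in the intermediate range where $|s|u$ is neither small nor large, and handling the potential blow-up of $((1-e^{-u})/u)^{\Re s}$ near $0$ when $\Re s < 0$. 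It is here that the choice $\ell \ge 1$ (produced by the shift to the canonical sequence) plays its essential role, allowing every non-cancelled contribution to be absorbed with at most polynomial cost in $|\Im s|$.
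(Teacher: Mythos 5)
Your proof follows essentially the same route as the paper's: boundedness on $\Re s\ge 0$ via $\lvert (1-e^{-u})^s\rvert=(1-e^{-u})^{\Re s}\le 1$ plus integrability of the majorants (using $c<-1$), and the left continuation by writing $A^{\langle m\rangle}(s)-\Gamma_\ell^{(m)}(s-c)$ as an integral against $M_s(u)$ and $B^{\langle m\rangle}(s)$ against $N_s(u)$, whose estimates $M_s(u)=\Theta(u)$ and $N_s(u)=\Theta(1)$ near $0$ shift the critical exponent to $u^{s-c}$ and give analyticity for $\Re(s-c)>-1$. Your argument is correct; it merely makes explicit (via the split at $u_0=1/(1+|s|)$) the uniformity in $\Im s$ that the paper leaves implicit in its $O_\rho\bigl(\Gamma_\ell(\sigma-c+1-\rho)\bigr)$ bounds, which depend only on $\sigma=\Re s$.
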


\begin{proof} 
 $(a)$ is clear :  For $\Re s\ge 0$,  the result follows from   the inequalities  $(1-e^-u)^\sigma \le 1$,   $c<-1$,  together with  the  integrability of the function $u \mapsto e^{-\ell u} u^{-c-1} \log ^m u$ on  the interval $[0, + \infty]$.

 $(b)$ The difference  $A^{\langle m \rangle}(s) -  \Gamma_\ell^{(m)} (s-c)$ is expressed  with  $M_s(u)$, whereas   $B^{\langle m \rangle}(s)$ is expressed with $N_s$, both defined in \eqref{Fs}.  Together with their  estimates~\eqref{estMs}, \eqref{estNs}, this   leads to the following bounds, for any $\rho>0$, 
 $$   A^{\langle m \rangle}(s) -  \Gamma_\ell^{(m)} (s-c) = O_\rho \Big(  \Gamma_\ell(\sigma -c +1 - \rho) \Big), \ \     
  B^{\langle m \rangle}(s) =  O_\rho \Big(  \Gamma(\sigma -c +1 - \rho) \Big) \, $$
  and  also to the analyticity of the functions of interest on the vertical strip  $\Re s >c -\sigma_0$, with $\sigma_0\in ]0, 1[$. 
\end{proof}

\begin{proposition}   Consider a basic sequence $F$ with pair $(d, b)$.  Let $\ell:= \sigma(d)$ with $\sigma$ defined in \eqref{ell} and $c = d-\ell$. Then
the analytic extension $\psi$   is of polynomial growth on $\Re s > c$ and is tame at $s = c$
\end{proposition}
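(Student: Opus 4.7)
The plan is to combine the two preceding lemmas so as to reduce the analytic study of $\psi$ near $s=c$ to the (explicit) analytic structure of the twisted Gamma functions $\Gamma_\ell^{(m)}(s-c)$. The decomposition lemma writes $\psi$ as a linear combination, over $m\in[0..b]$, of terms of the form $A^{\langle m\rangle}(s)+O(B^{\langle m\rangle}(s))$; call the coefficients $\alpha_m$. The growth lemma tells us that on the strip $\Re s>c-\sigma_0$ (with $\sigma_0\in\,]0,1[$) both $A^{\langle m\rangle}(s)-\Gamma_\ell^{(m)}(s-c)$ and $B^{\langle m\rangle}(s)$ are analytic and of polynomial growth. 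Consequently every question about $\psi$ on the strip $\Re s>c-\sigma_0$ is equivalent to the same question about the combination $\sum_m \alpha_m\,\Gamma_\ell^{(m)}(s-c)$, modulo analytic functions of polynomial growth.

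Next I would unfold the twisted Gamma. The substitution $v=\ell u$ in \eqref{twGamma} yields
$$\Gamma_\ell^{(m)}(s)=\ell^{-s}\int_0^\infty e^{-v}\,v^{s-1}(\log v-\log\ell)^m\,dv=\ell^{-s}\sum_{j=0}^{m}\binom{m}{j}(-\log\ell)^{m-j}\,\Gamma^{(j)}(s),$$
so each $\Gamma_\ell^{(m)}$ is a finite linear combination of derivatives of the classical $\Gamma$ multiplied by the entire factor $\ell^{-s}$. Since $\Gamma$ is meromorphic on $\mathbb C$ with a simple pole at $s=0$, $\Gamma^{(j)}(s)$ has a pole of order $j+1$ at $s=0$; after the shift $s\mapsto s-c$, the function $\Gamma_\ell^{(m)}(s-c)$ is meromorphic on $\mathbb C$ with a pole of order $m+1$ at $s=c$, and the constraint $\sigma_0<1$ keeps the strip $\Re s>c-\sigma_0$ clear of the other Gamma poles at $s=c-n$, $n\geq 1$. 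Polynomial growth along verticals is then a direct consequence of Stirling: $|\Gamma(\sigma+it)|$ decays like $e^{-\pi|t|/2}$ on any vertical strip, and differentiating in $s$ only introduces extra powers of $\log|t|$, while $\ell^{-s}$ stays bounded.

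Assembling everything, $\psi$ is meromorphic on $\Re s>c-\sigma_0$ with a sole pole at $s=c$ of order at most $b+1$ (the maximal order coming from the $m=b$ term) and is of polynomial growth along verticals. Part $(a)$ of the last lemma gives boundedness of $A^{\langle m\rangle}$ and $B^{\langle m\rangle}$ on $\Re s\geq 0$, and for $\Re s>c$ the Gamma-corrected pieces are of polynomial growth by Stirling, so $\psi$ is of polynomial growth on $\Re s>c$. This is precisely the $S$-shape condition of the definition of tameness, proving both claims.

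The main obstacle I expect is verifying that the pole order at $s=c$ is \emph{exactly} $b+1$, not lower through a hidden cancellation of the $\alpha_m$. This reduces to checking that the coefficient of the $m=b$ component in the decomposition of $\psi$ is nonzero, which in turn is inherited from the non-vanishing coefficient of $\log^b u$ in $\hat\varphi$ produced in Proposition~\ref{cano2} (coming from the $b$-th derivative in $t$ at $t=0$). Once this nonvanishing is recorded, all the remaining steps are direct applications of the two lemmas together with the Stirling asymptotics of $\Gamma$ and its derivatives.
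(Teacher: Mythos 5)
Your proof is correct and follows essentially the same route as the paper: both reduce $\psi$, modulo functions analytic and of polynomial growth on the strip $\Re s>c-\sigma_0$, to the combination of twisted Gamma functions $\Gamma_\ell^{(m)}(s-c)$, which are meromorphic with a pole of order $m+1$ at $s=c$ and decay along vertical lines (the paper invokes Lemma~\ref{ES} where you invoke Stirling directly). Your concern about the exact pole order is not needed for tameness as defined here, and the paper itself records in the remark after Theorem~\ref{tame} that the order can drop below $b+1$ through cancellation.
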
 
\begin{proof}  The  twisted function $\Gamma_\ell^{(m)}$ is meromorphic at $s = 0$  with a pole of order  $m +1$. Moreover, with Lemma \ref{ES}, the  twisted function $\Gamma_\ell$ and its derivatives  are of exponential decrease along the vertical lines. This entails the  tameness of  $\psi$  at $s = c$. 
\end{proof}

  \subsection{Tameness of  the $\Psi$ function relative to a basic sequence.}

 Returning to the initial  sequence $F$ and the analytic extension $\Psi$ of the sequence $F$, we apply Assertions $(a)$ and $(b)$ of Lemma  \ref{shift} and we   obtain the main result of this Section,   which entails Theorem  \ref{thmfdb}.    
\begin{theorem} \label{tame}
Consider a basic sequence $F:=F_{d, b}$ 
defined as in \eqref{fdb}, denote by $\ell := \sigma(d)$ where $\sigma$ is defined in \eqref{ell}. Then, for some $\sigma_0>0$,  
the analytic continuation $\Psi(s)$ of the $\Pi[F]$  sequence  is of polynomial growth on any halfplane $\Re s \ge a > d$. Moreover, it is tame at $s = d$, and it  writes as 
\begin{equation} \label{expan}
\Psi (s) =\left[ s(s-1) \ldots (s-\ell+1)  \sum_{m = 0} ^b  a_m \,  {\Gamma_\ell^{(m)} (s-d)} \right] + B(s) \, 
\end{equation}
on the halfplane  $\Re s> d- \sigma_0$, for $\sigma_0 \in ]0, 1[$. Here, 
$B(s)$ is of polynomial growth, and   ${\Gamma_\ell^{(m)}}$  is the $m$-th derivative of the twisted $\Gamma$ function defined in  \eqref{twGamma}.
The coefficients $a_m$   are expressed with the derivatives of order $ k \le b$ of the function $s \mapsto 1/\Gamma(s) $ at $ s= \ell-d$. 
\end{theorem}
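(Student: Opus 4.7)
The plan is to return from the canonical sequence $f$ (associated with $F = F_{d,b}$ via $F^+$ and the shift $T^\ell$) back to the initial sequence $F$, and to transfer the tameness result just obtained for $\psi$ into the claimed tameness of $\Psi$. The key bridge is Assertion $(a)$ of Lemma~\ref{shift}, which yields the relation $\Pi[f] = (-1)^\ell\, T^\ell[\Pi[F^+]]$. Inverting $T^\ell$ at integer arguments, and noting that $F$ and $F^+$ coincide for $n > \ell$, this gives
\[
\Pi[F](n) = (-1)^\ell\, n(n-1)\cdots(n-\ell+1)\, \Pi[f](n-\ell) + R(n),
\]
where $R$ is a finite linear combination of binomial coefficients $\binom{n}{k}$ with $k \le \ell$, accounting for the discrepancy between $F$ and $F^+$ on their first $\ell$ terms. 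Lifting this identity to the complex plane by replacing $n$ with $s$ then produces the analytic continuation
\[
\Psi(s) = (-1)^\ell\, s(s-1)\cdots(s-\ell+1)\, \psi(s-\ell) + R(s),
\]
with $R(s)$ polynomial in $s$, hence harmless for both polynomial growth and tameness.

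Next I would insert into this formula the expansion of $\psi$ established in the preceding proposition,
\[
\psi(t) = \sum_{m=0}^b a_m\, \Gamma_\ell^{(m)}(t - c) + B_1(t),
\]
valid on the strip $\Re t > c - \sigma_0$, where $c = d-\ell$ and $B_1$ is analytic and of polynomial growth there. Substituting $t = s - \ell$ transforms the shift of the argument $t - c$ into $s - d$, because $-\ell - c = -d$. Multiplying by the polynomial prefactor $s(s-1)\cdots(s-\ell+1)$, and absorbing signs and the polynomial terms $R(s)$ and the prefactor times $B_1(s-\ell)$ into the remainder $B(s)$, yields exactly the decomposition~\eqref{expan}. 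The polynomial growth of $\Psi$ on any halfplane $\Re s \ge a > d$ follows since both factors and $B$ have polynomial growth there, while the twisted Gamma functions $\Gamma_\ell^{(m)}$ are in fact of exponential decrease along vertical lines (Lemma~\ref{ES}); this same exponential decrease, together with the meromorphy of $\Gamma_\ell^{(m)}(s-d)$ with a pole of order $m+1$ at $s = d$, gives the tameness of $\Psi$ at $s=d$.

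The identification of the coefficients $a_m$ comes from unwinding Proposition~\ref{cano2}: the logarithmic factor is handled by the differentiation identity
\[
(s+\ell)^c \log^b(s+\ell) = \frac{\partial^b}{\partial t^b} (s+\ell)^{c+t}\Big|_{t=0},
\]
so that the Leibniz rule applied to $u^{-c-1}/\Gamma(-c)$ produces linear combinations of the derivatives of $H(c) := 1/\Gamma(-c)$ at $c = d - \ell$, that is, derivatives of $1/\Gamma(s)$ at $s = \ell - d$, as claimed.

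The main obstacle I expect is bookkeeping, not conceptual: carefully tracking how the shift of argument $s \mapsto s - \ell$ interacts with the expansion of $\psi$, and checking that the polynomial prefactor $s(s-1)\cdots(s-\ell+1)$ does not destroy the tameness even when $d$ is a nonnegative integer in $\{0,\ldots,\ell-1\}$ (where this prefactor vanishes at $s=d$; in that case the order of the pole of $\Psi$ at $s=d$ simply drops by one, but tameness is preserved). The remaining delicate point is verifying uniformly that the remainder $B(s) = (-1)^\ell s(s-1)\cdots(s-\ell+1) B_1(s-\ell) + R(s)$ retains polynomial growth on the whole strip $\Re s > d - \sigma_0$, which follows directly from the growth bound on $B_1$ provided by the previous lemma.
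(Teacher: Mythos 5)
Your proposal is correct and follows essentially the same route as the paper, which disposes of this theorem in a single sentence by invoking Assertions $(a)$ and $(b)$ of Lemma~\ref{shift} to transport the tameness of $\psi$ (established for the canonical sequence) back to $\Psi$ via the relation $\Psi(s) = (-1)^\ell s(s-1)\cdots(s-\ell+1)\,\psi(s-\ell) + R(s)$ and the shift $t-c = s-d$. You supply the bookkeeping the paper omits — the polynomial correction $R$ coming from $F$ versus $F^+$, the identification of the $a_m$ with derivatives of $1/\Gamma$ at $\ell-d$, and the possible drop in pole order noted in the paper's Remark $(b)$ — all consistently with the paper's argument.
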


\smallskip
{\bf Remarks.}  $(a)$  With Lemma \ref{ES}, the  twisted function $\Gamma_\ell$ and its derivatives  are of exponential decrease along the vertical lines,. This entails the  tameness of  $\Psi$  at $s = d$.  \\
$(b)$ The multiplicity of the pole at $s = d$ is at most $b+1$; this is due to a possible cancellation with   the linear factor $(s-d)$ when it appears in the first part. This occurs  for the  generalized sorting toll,   ($d = 1$) and the function $\Psi$ has only a pole at $s = 1$ of order $b$.\\
$(c)$  The derivatives of the $\Gamma$ function are related to the derivatives of the  function  $\log \Gamma (s)$ which arises in  many contexts of Number Theory.  One has for instance $\Gamma'(1) = - \gamma$ (the Euler constant) and $\Gamma''(1) = \zeta(2) + \gamma^2$.  See \cite{Hab} for  other examples of computations.  \\
$(d)$ We already know the singular part  of $\Psi$ at $s = d$ which is given by the expansion \eqref{zeta}, and the  singular expansion given in \eqref{expan}  is not new. What is new is the tameness, not the singular expansion.

 \subsection{ A second proof for Theorem ~\ref{anatrie}.} This also gives an alternative proof of Theorem~\ref{anatrie} that uses the Rice-Mellin path:  We consider the singular part of the extension $\Psi$ at $s = 1$, already obtained in \eqref{zeta}, together with the  tameness  of the Dirichet series $\Lambda$ at $s = 1$, and  finally the tameness of  $\Psi$ just obtained in Theorem{tame}. This gives a proof with two or thee lines..., much more direct than the proof that we gave in Section  \ref{triedepo}.  We prefer this proof!

  \section{Conclusions. Possible Extensions.}

 \subsection{Possible extension of  Theorem \ref{thmfdb}.} We now ask the question of possible extensions of Theorem \ref{thmfdb} to more general sequences, and we  explain how it is  possible to extend our result to  sequences   $F$ what we call {\em extended basic sequences}. 

\begin{definition} \label{extfdb}  A sequence $F$ 
is  an extended basic  sequence with pair $(d, b)$ if it admits an extension  $\Phi$ on some  halplane $\Re s > a$,     which involves  an analytic  function $W$  at $0$,  with $W(0) \not = 0$, of the form 
$$ \Phi(z) = F_{d, b} (z) \,  W\left( \frac 1 z\right) .$$
\end{definition}

It is easy to extend the proof of Theorem~\ref{tame} and thus this of Theorem \ref{thmfdb}  to this more general case:  We denote by $r$ the   convergence radius of $W$, and we thus  choose  a shift $T^\ell$ with an integer which now   satisfies 
$$\ell \ge \max \Big[2 + \lfloor d\rfloor, a +1,  (1/r) +1\Big], $$
and deal with the sequence $f := T^\ell[F]$. We replace the  previous series $U$ defined in \eqref{Ud} by the series $U\cdot W$
which has now a convergence radius $\tilde r:= \min (r, 1/(\ell-1)) $ for which  the bound $1/\tilde r < \ell$ holds.  We choose $\hat r \in ]1/ \tilde r, \ell[$,  and  the new  series $V_c$ defined in \eqref{hatvarphic} satisfies $|V_c(u)| \le  A u\ e^{\hat r u}$ and  indeed gives rise to a remainder term. 

\subsection{Final comparison between the two methods.} Even with  our main result (possibly) generalized  to extended basic sequences, the Rice-Mellin path  seems to  remain  (for the moment...) of more restrictive use than the Depoissonisation path, in three aspects: 

\begin{itemize} 

\item[$(a)$]  In the Rice-Mellin path,  we need the analytic extension  $\varphi$ of $f$ to hold on a halfplane, whereas  the Depoissonisation  path needs it only on a horizontal cone.

\item[$(b)$] 
 In the Rice-Mellin path, the analytic extension $\varphi$  of $f$ involves  a precise expansion in terms of an  analytic series $W$, whereas the Depoissonisation path  only needs a rough  asymptotic estimate of $\varphi$. 

\item[$(c)$] Finally, In the Rice-Mellin path, the exponent of the $\log$ term is an integer $b$, whereas the Depoissonisation path   deals with any  real exponent.   The need of an integer exponent $b$ is related to the interpretation in terms of  $b$-derivatives, and this is a restriction which is also inherent in the method used  by Flajolet in \cite{Fl1} in a similar context. 

\end{itemize} 
These are strong restrictions... However,  most of the Depoissonisation analyses deal with extended basic sequences, where the Rice-Mellin path  may be used. We leave the final conclusion to the reader !

 \subsection {Description of a formal comparison between the two paths.}  As it is observed in  the paper \cite{HwFuZa}, there are formal manipulations which  allow us to compare the two paths.

\medskip 
In the Depoissonisation path, the asymptotics of  $f(n)$   is  manipulated  in  two  steps: first use  the Cauchy integral formula  
\begin{equation} \label{depoi-ii}
  f(n) = \frac {n!}{2 i \pi} \int_{|z] = r}  P(z)\,  e^z  \frac 1 {z^{n+1}}  dz  \, .
\end{equation}
then 
derive asymptotics of  $P(z)$ for large $|z|$ by the inverse Mellin integral
\begin{equation} \label {Me} P(z) = \frac 1 {2 i\pi}  \int_{\uparrow} P^\ast (s) z^{-s} ds = \frac 1 {2 i\pi}  \int_{\uparrow} P^\ast (-s) z^{s} ds \, , 
\end{equation}
where the integration path is some vertical line. This two-stage Mellin-Cauchy  formula  is the beginning point of  the Depoissonization path. 

\medskip 
We now compare  the formula obtained by this two stage approach with the  Mellin-Newton-Rice formula. 
 First remark that, as the function $P(z) e^{z}$ is entire, we can replace the contour $\{|z| = r\}$ in \eqref{depoi-ii} by a Hankel contour  ${\cal H}$ starting  at $-\infty$ in the upper halfplane, winding clockwise around the origin and proceeding towards $- \infty$ in the lower halfplane.  Then \eqref{depoi-ii} becomes 
 \begin{equation} \label {Han}
  f(n) = \frac {n!}{2 i \pi} \int_{{\cal H}}  P(z)\,  e^z  \frac 1 {z^{n+1}}  dz
  \end{equation}
Now, if we {\em formally substitute }\eqref{Me} into  \eqref{Han},  {\em interchange} the order of integration and use the equality 
$$ \frac 1 { \Gamma (n+1-s)} = \frac 1 {2i\pi}   \int_{{\cal H}}  e^z   \frac {z^{s}} {z^{n+1}}  dz \, , $$
we obtain the representation
\begin{equation} \label {tot}
  f(n) =  \frac {n!}{2 i \pi}  \int_{\uparrow} P^\ast (-s) \frac 1 {\Gamma(n+1-s)} ds \, 
\end{equation}
and we  recognize in \eqref{tot} the Rice integral 
$$   f(n) = \frac {n!}{2 i \pi}  \int_{\uparrow} \frac { P^\ast (-s)}{ \Gamma(-s)} \frac {\Gamma(-s)} {\Gamma(n+1-s)} ds   =  \frac {1}{2 i \pi}  \int_{\uparrow}   \Pi[f](s)  \frac { (-1) ^{n+1}\,  n!} {s(s-1)     \ldots (s-n)} ds \, .$$

\medskip
 This exhibits   a formal comparison between the two paths.  However,  this comparison is {\em only formal}  because the previous  manipulations  may be meaningless due to the divergence of the
integrals.

\end{document}